\newtheorem{lemma}{Lemma}
\newtheorem{theorem}[lemma]{Theorem}
\newtheorem{corollary}[lemma]{Corollary}
\newtheorem{proposition}[lemma]{Proposition}
\newtheorem{problem}[lemma]{Problem}
\newtheorem{fact}[lemma]{Fact}
\newcommand{\R}{\mathbb{R}}
\newcommand{\N}{\mathbb{N}}
\newcommand{\Y}{\mathbb{Y}}
\newcommand{\F}{\mathcal{F}}
\newcommand{\p}{\mathbf{p}}
\newcommand{\q}{\mathbf{q}}
\newcommand{\Sym}{\mathfrak{S}}
\newcommand{\M}{\mathcal{M}}
\newcommand{\ee}{e$^2$}
\newcommand{\eeee}{e$^3$}
\DeclareMathOperator{\Tr}{Tr}
\DeclareMathOperator{\contents}{contents}
\author{Maciej Do\l\k{e}ga \addressmark{1}\and
Valentin F\'eray\addressmark{2}\and
Piotr \'Sniady\addressmark{1}\thanks{Supported by the
MNiSW research grant P03A 013 30, by the
EU Research Training Network ``QP-Applications'', contract HPRN-CT-2002-00279
and by the EC Marie Curie Host Fellowship for the Transfer of Knowledge
``Harmonic Analysis, Nonlinear Analysis and Probability'', contract
MTKD-CT-2004-013389. P\'S thanks Marek Bo\.zejko, Philippe Biane, Akihito Hora,
Jonathan Novak, \'Swiatosław Gal and Jan Dymara for several stimulating
discussions during various stages of this research project.
}}
\address{\addressmark{1} Institute of Mathematics,
University of Wroclaw,  \mbox{pl.\ Grunwaldzki~2/4,} 50-384
Wroclaw, Poland \\
\addressmark{2}The Gaspard--Monge Institute of Electronics and Computer Science,
University of Marne-La-Valle\'e Paris-Est, 77454 Marne-la-Vall\'ee Cedex 2,
France}
\title{Characters of symmetric
groups in terms of free cumulants and Frobenius coordinates}
\keywords{characters of symmetric groups, free cumulants, Kerov polynomials,
Stanley polynomials}
\begin{document}
\maketitle
\begin{abstract}
Free cumulants are nice and useful functionals of the shape of a Young diagram,
in particular they give the asymptotics of normalized characters of symmetric
groups $\mathfrak{S}(n)$ in the limit $n\to\infty$. We give an explicit
combinatorial formula for normalized characters of the symmetric groups in terms
of free cumulants. We also express characters in terms of Frobenius coordinates.
Our formulas involve counting certain factorizations of a given permutation. The
main tool are Stanley polynomials which give values of characters on
multirectangular Young diagrams.

\noindent {\bf R\'esum\'e.}
Les cumulants libres sont des fonctions agr\'eables et utiles sur l'ensemble des
diagrammes de Young, en particulier, ils donnent le comportement asymptotiques
des caract\`eres normalis\'es du groupe sym\'etrique $\mathfrak{S}(n)$ dans la
limite $ n \to \infty$. Nous donnons une formule combinatoire explicite pour les
caract\`eres normalis\'es du groupe sym\'etrique en fonction des cumulants
libres. Nous exprimons \'egalement les caract\`eres en fonction des
coordonn\'ees de Frobenius. Nos formules font intervenir le nombre de certaines
factorisations d'une permutation donn\'ee. L'outil principal est la famille de
polynômes de Stanley donnant les valeurs des caract\`eres sur les diagrammes de
Young multirectangulaires. 
\end{abstract}


\begin{figure}[t]
\centerline{\includegraphics{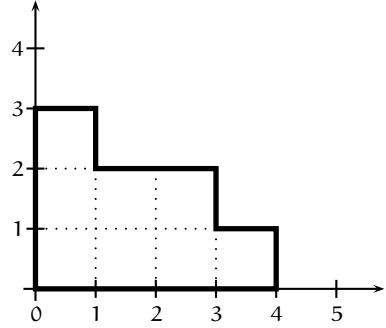}} 
\caption{Young diagram $(4,3,1)$ drawn in the French convention}
\label{fig:french}
\end{figure}

\section{Introduction}

This contribution is an extended abstract of a full
version \cite{DolegaF'eray'Sniady2008} which will be published elsewhere.

\subsection{Dilations of Young diagrams and normalized characters}

For a Young diagram $\lambda$ and an integer $s\geq 1$ we denote by
$s\lambda$ the \emph{dilation} of $\lambda$ by factor $s$. This operation can be
easily described on a graphical representation of a Young diagram: we just
dilate the picture of $\lambda$ or, alternatively, we replace each box of
$\lambda$ by a grid of $s\times s$ boxes.

Any permutation $\pi\in \Sym(k)$ can be also regarded as an element of $\Sym(n)$
if $k\leq n$ (we just declare that $\pi\in \Sym(n)$ has additional $n-k$
fixpoints).
For any $\pi\in \Sym(k)$ and an irreducible representation $\rho^\lambda$ of the
symmetric group $\Sym(n)$ corresponding to the Young diagram $\lambda$ we
define the \emph{normalized character}
\begin{displaymath}
\Sigma^\lambda_\pi =\begin{cases} \underbrace{n(n-1)\cdots
(n-k+1)}_{k \text{
factors}} \frac{\Tr \rho^\lambda(\pi)}{\text{dimension of $\rho^\lambda$}} &
\text{if } k\leq n, \\ 0 & \text{otherwise.} \end{cases} 
\end{displaymath}
We shall concentrate our attention on the characters on cycles, therefore we
will use the special notation
\begin{displaymath} 
\Sigma^\lambda_k = \Sigma^\lambda_{(1,2,\dots,k)},
\end{displaymath}
where we treat the cycle $(1,2,\dots,k)$ as an element of $\Sym(k)$ for any
integer $k\geq 1$.

The notion of dilation of a Young diagram is very useful from the viewpoint of
the asymptotic representation theory because it allows us to ask the following
question:
\begin{problem} What can
we say about the characters of the symmetric groups in the limit when the Young
diagram $\lambda$ tends in some sense to infinity in a way that $\lambda$
preserves its shape?
\end{problem} This informal problem can be formalized as follows: for
fixed $\lambda$ and $k$ we ask about (asymptotic) properties of the
normalized characters $\Sigma^{s\lambda}_{k}$ in the limit $s\to\infty$. The
reason why we decided to study this particular normalization of characters is
the following well-known yet surprising result.
\begin{fact}
For any Young diagram $\lambda$ and integer $k\geq 2$ the normalized
character on a dilated diagram
\begin{equation}
\label{eq:polynomial}
 \N\ni s \mapsto \Sigma^{s\lambda}_{k-1}
\end{equation}
is a polynomial function of degree (at most) $k$. 
\end{fact}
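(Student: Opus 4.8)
The plan is to realise $\Sigma_{k-1}$ as a function of $\lambda$ that is polynomial, of controlled degree, in coordinates that scale homogeneously under dilation; the statement will then follow by a substitution. The coordinates I would use are the power sums of the \emph{contents} of $\lambda$. Writing $\contents(\lambda)=\{\,j-i:(i,j)\in\lambda\,\}$ for the multiset of contents, set $c_r(\lambda)=\sum_{c\in\contents(\lambda)}c^{\,r}$ for $r\ge 0$; in particular $c_0(\lambda)=|\lambda|$.

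First I would record the effect of dilation on these coordinates. Replacing the box $(i,j)$ of content $c=j-i$ by an $s\times s$ grid creates boxes of contents $sc+(b-a)$ for $1\le a,b\le s$, so that $\contents(s\lambda)=\{\,sc+(b-a):c\in\contents(\lambda),\ 1\le a,b\le s\,\}$. Since each summand $\bigl(sc+(b-a)\bigr)^{r}$ is a polynomial in $s$ of degree $r$ and there are $O(s^{2})$ of them, $c_r(s\lambda)$ is a polynomial in $s$ of degree at most $r+2$ (the odd powers of $b-a$ cancel by the symmetry $a\leftrightarrow b$). Consequently, if one assigns to the symbol $c_r$ the weight $r+2$, any isobaric polynomial of weight $w$ in the $c_r$ becomes, under $\lambda\mapsto s\lambda$, a polynomial in $s$ of degree at most $w$.

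The crux -- and the step I expect to be the main obstacle -- is to prove that $\Sigma^\lambda_{k-1}$ is a polynomial in $c_0,c_1,c_2,\dots$ all of whose monomials have weight at most $k$. I would deduce this from the calculus of the Jucys--Murphy elements $X_i=(1\,i)+(2\,i)+\cdots+(i-1\,\,i)$ in $\C[\Sym(n)]$. Two classical facts combine here: the centre of $\C[\Sym(n)]$ consists precisely of the symmetric functions of the $X_i$, and on $\rho^\lambda$ a symmetric function of the $X_i$ acts by the scalar obtained by evaluating it at the contents of $\lambda$ -- this is what feeds the $c_r$ into the characters. On the other hand $\Sigma^\lambda_{k-1}$ is, up to the falling-factorial normalisation already present in its definition, the eigenvalue by which the class sum of $(k-1)$-cycles acts on $\rho^\lambda$. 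Filtering the centre by the number of non-fixed points, one shows that this class sum agrees, modulo terms of strictly lower filtration, with the power sum $\sum_i X_i^{\,k-2}$, whose eigenvalue is $c_{k-2}(\lambda)$; hence the eigenvalue of the class sum equals $c_{k-2}(\lambda)$ up to a correction built from shorter class sums. The delicate point is that the structure constants of the change of basis between power sums of the $X_i$ and the class sums are polynomials in $|\lambda|=c_0$ of exactly the degree needed to keep the grading homogeneous (and the passage from a class-sum eigenvalue to the corresponding normalised character introduces no extra power of $|\lambda|$, since both carry the same falling factorial). Running this reduction to the bottom of the filtration and checking at every stage that no monomial in the $c_r$ exceeds weight $k$ is the genuinely technical part; the top contribution comes from $c_{k-2}$, of weight $(k-2)+2=k$.

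Combining the two steps finishes the argument: substituting $c_r\mapsto c_r(s\lambda)$, a polynomial in $s$ of degree $\le r+2$, into the weight-$\le k$ content polynomial representing $\Sigma_{k-1}$ yields a polynomial in $s$ of degree at most $k$, as claimed. As a sanity check, for $k=2$ one has $\Sigma^\lambda_1=c_0(\lambda)=|\lambda|$, dilating to $s^{2}|\lambda|$, and for $k=3$ one has $\Sigma^\lambda_2=2\,c_1(\lambda)$, dilating to a cubic in $s$. An alternative, more conceptual route replaces the contents by the free cumulants $R_2,R_3,\dots$ of the transition measure of $\lambda$: these satisfy $R_j(s\lambda)=s^{\,j}R_j(\lambda)$ exactly, the coefficient of $s^{k}$ is then precisely the free cumulant $R_k(\lambda)$ (so the degree bound is tight for generic $\lambda$), and the same weight count applies. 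I present the content-polynomial version as the primary plan only because the free-cumulant route presupposes exactly the machinery that the remainder of the paper develops.
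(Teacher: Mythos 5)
The paper itself never proves this Fact: it is quoted as a well-known result and serves as motivation (free cumulants are then \emph{defined} through it). Within the paper's own toolkit the quickest proof would go through the Stanley--F\'eray formula (Theorem \ref{theo:stanley-feray-old}): writing $\lambda=\p\times\q$ with integer sides, every monomial occurring in $\Sigma^{\p\times\q}_{k-1}$ has total degree $|C(\sigma_1)|+|C(\sigma_2)|\le k$, because $(k-1-|C(\sigma_1)|)+(k-1-|C(\sigma_2)|)\ge k-2$ for any factorization of a $(k-1)$-cycle in $\Sym(k-1)$; since $s\lambda=(s\p)\times(s\q)$, substituting $s\p,s\q$ gives a polynomial in $s$ of degree at most $k$. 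Your route through content power sums $c_r$ and Jucys--Murphy elements is genuinely different from anything in the paper, and it is a legitimate classical path (essentially the Kerov--Olshanski/Ivanov--Kerov approach). Your dilation step is correct and cleanly executed: $c_r(s\lambda)$ is indeed a polynomial in $s$ of degree at most $r+2$, and the weight assignment $\mathrm{wt}(c_r)=r+2$ is the right one --- it matches the paper's grading in which $S_{r+2}$ has degree $r+2$. Your sanity checks $\Sigma_1=c_0$ and $\Sigma_2=2c_1$ are also correct.

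The problem is the crux: the claim that $\Sigma^\lambda_{k-1}$ is a polynomial of weight at most $k$ in $c_0,c_1,c_2,\dots$ is, given your first step, \emph{equivalent} to the Fact, and what you offer for it is a pointer rather than a proof. Two concrete holes. First, the filtration induction cannot run over cycles alone: writing $\mathcal{C}_\mu$ for the class sum of permutations of reduced cycle type $\mu$, the difference $\sum_i X_i^{k-2}-\mathcal{C}_{(k-1)}$ is a combination of class sums of \emph{arbitrary} type, not of shorter cycles --- for instance $\sum_i X_i^4$ contains permutations of type $(2,2)$, via products such as $(a\,i)(b\,i)(a\,i)(c\,i)=(a\,b)(c\,i)$ --- so the statement one must prove inductively is a weight bound $|\mu|+\ell(\mu)$ for the normalized character of \emph{every} conjugacy class $\mu$, a statement your sketch never formulates. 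Second, the assertion that the coefficients of the lower class sums are polynomials in $n=c_0$ ``of exactly the degree needed'' is precisely the hard quantitative content: one needs that the coefficient of $\mathcal{C}_\mu$ in $\sum_i X_i^{k-2}$ is a polynomial in $n$ of degree at most $\tfrac12\bigl(k-|\mu|-\ell(\mu)\bigr)$. Even the polynomiality in $n$ requires an argument (centrality plus a counting argument, or the Ivanov--Kerov algebra of partial permutations), and the degree bound requires more; for example the coefficient of a fixed transposition in $\sum_i X_i^3$ is $2n-3$, which is not obvious without the full computation. Since every other step of your proposal is routine, this unproved assertion carries the entire weight of the Fact, so the proposal as it stands is a reduction, not a proof. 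A final remark: the free-cumulant alternative you mention at the end is circular in the context of this paper, because here $R_k$ is defined as $[s^k]\Sigma^{s\lambda}_{k-1}$, which presupposes the Fact.
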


\subsection{Generalized Young diagrams}
\label{sec:generalized-Young-diagrams}

Any Young diagram drawn in the French convention can be identified with its
graph which is equal to the set
$\{ (x,y): 0\leq x, 0\leq y\leq f(x) \} $ for a suitably chosen function
$f:\R_+\rightarrow\R_+$, where $\R_+=[0,\infty)$, cf.~Figure \ref{fig:french}.
It is therefore natural to
define the set of \emph{generalized Young diagrams
$\Y$ (in the French convention)} as the set of bounded, non-increasing functions
$f:\R_+\rightarrow\R_+$ with a compact support; in this way any Young diagram
can be regarded as a generalized Young diagram. Notice that with the notion of
generalized Young diagrams we may consider dilations $s\lambda$ for any
real $s\geq
0$.

\subsection{How to describe the shape of a Young diagram?}
Our ultimate goal is to explicitly express the polynomials
\eqref{eq:polynomial} in terms of the shape of $\lambda$. However, before we
start this task we must ask ourselves: how to describe the shape of $\lambda$
in the best way? In the folowing we shall present two approaches to this
problem.

We define the \emph{fundamental functionals of shape} of a Young
diagram $\lambda$ by an integral over the area of $\lambda$
\begin{displaymath} 
S^\lambda_k = (k-1) \iint_{(x,y)\in\lambda} 
(\contents_{(x,y)})^{k-2}\ dx\ dy 
\end{displaymath}
for integer $k\geq 2$, and where $\contents_{(x,y)}=x-y$ is the contents of a
point of a Young diagram. When it does not lead to confusions we will skip the
explicit dependence of the fundamental functionals on Young diagrams and instead
of $S^\lambda_k$ we shall simply write $S_k$.  Clearly, each functional $S_k$ is
a homogeneous function of the Young diagram with degree $k$,
i.e.~$S^{s\lambda}_k=s^k S^{\lambda}_k$.

For a Young diagram with Frobenius coordinates 
$\lambda=\begin{bmatrix} a_1 & \cdots & a_l \\ b_1 & \cdots & b_l \end{bmatrix}
$ we define its \emph{shifted Frobenius coordinates} 
with $A_i=a_i+\frac{1}{2}$ and
$B_i=b_i+\frac{1}{2}$. Shifted Frobenius coordinates have a simple
interpretation as positions (up to the sign) of the particles and holes in the
Dirac sea corresponding to a Young diagram \cite{Okounkov2001infinite_wedge}.
Functionals $S^\lambda_k$ can be nicely expressed in terms of (shifted)
Frobenius coordinates as follows:
\begin{displaymath} 
S^\lambda_k = 
\sum_i  \int_{-\frac{1}{2}}^{\frac{1}{2}} \left[ (A_i+z)^{k-1}
- (-B_i-z)^{k-1}\right] dz, 
\end{displaymath}
\begin{equation}
\label{eq:asymptotyka-S}
 \frac{S^\lambda_k}{|\lambda|^{k-1}}= 
\sum_i  \left[ \alpha_i^{k-1} - (-\beta_i)^{k-1} \right] +
O\left(\frac{1}{|\lambda|^2} \right), 
\qquad \text{where $\alpha_i=\frac{A_i}{|\lambda|}$ and $
\beta_i=\frac{B_i}{|\lambda|}$.}
\end{equation}

Another way of describing the shape of a Young diagram $\lambda$ is to use its 
\emph{free cumulants} $R_2^\lambda,R_3^\lambda,\dots$ which are
defined as the coefficients of the
leading terms of the polynomials \eqref{eq:polynomial}:
\begin{displaymath} 
R_{k}^\lambda = [s^k] \Sigma^{s\lambda}_{k-1}=
\lim_{s\to\infty} \frac{1}{s^{k}}
\Sigma^{s\lambda}_{k-1} \qquad \text{for integer $k\geq 2$}.
\end{displaymath}
Later on we shall show how to calculate free cumulants directly from the shape
of a Young diagram. 
$R_k$ is a homogeneous function of the Young diagram with degree $k$,
i.e.~$R^{s\lambda}_k=s^k R^{\lambda}_k$.

The importance of homogeneity of $S_k^\lambda$ and $R_k^\lambda$ becomes clear
when one wants to solve asymptotic problems, such as understanding coefficients
of the polynomial \eqref{eq:polynomial}.

\subsection{Character polynomials and their applications}

It is not very difficult to show \cite{DolegaF'eray'Sniady2008} that for each
integer $k\geq 1$ there exists a polynomial with rational coefficients
$J_k(S_2,S_3,\dots)$ with a property that
\begin{displaymath} 
\Sigma_k^\lambda= J_k(S_2^\lambda,S_3^\lambda,\dots)
\end{displaymath}
holds true for any Young diagram $\lambda$. For example, we have
\begin{displaymath} 
\Sigma_{1} = S_{2}, \qquad  \Sigma_{2} = S_{3}, \qquad 
\Sigma_{3} = S_{4}-\frac{3}{2} S_{2}^2+S_{2}, \qquad 
\Sigma_{4} = S_{5}-4 S_{2} S_{3}+5 S_{3},
\end{displaymath}
\begin{displaymath} 
\Sigma_{5} = S_{6}-5 S_{2}
S_{4}-\frac{5}{2} S_{3}^2+\frac{25}{6}
S_{2}^3+15S_{4}-\frac{35}{2}S_{2}^2+8S_{2}.
\end{displaymath}

The polynomials $J_n$ are very useful, when one studies the asymptotics of
characters in the limit when the parameters
$\alpha_1,\alpha_2,\dots,\beta_1,\beta_2,\dots$ converge to some limits and the
number of boxes of $\lambda$ tends to infinity. Equation
\eqref{eq:asymptotyka-S} shows that for such scaling it is convenient to
consider a different gradation, in which the degree of $S_k$ is equal to $k-1$.
We leave it as an exercise to the Reader to use the results of this paper
to show that with respect to this gradation polynomial $J_k$ has the form
\begin{displaymath} 
\Sigma_k = S_{k+1} - \frac{k}{2} \sum_{j_1+j_2=k+1} S_{j_1}
S_{j_2} + \text{(terms of smaller degree)}. 
\end{displaymath}
The dominant part of the right-hand side (the first summand) coincides with the
estimate of Wassermann \cite{Wassermann1981} and with Thoma character on
$\Sym(\infty)$ \cite{VershikKerov1981a}. In a similar way it is possible to
obtain next terms in the expansion. 


One can also show that for each
integer $k\geq 1$ there exists a polynomial with integer coefficients
$K_k(R_2,R_3,\dots)$, called \emph{Kerov character polynomial}
\cite{Kerov2000talk,Biane2003} with a property that 
\begin{displaymath} 
\Sigma_k^\lambda= K_k(R_2^\lambda,R_3^\lambda,\dots)
\end{displaymath}
holds true for any Young diagram $\lambda$.  For example,
\begin{displaymath} 
\Sigma_1 = R_2, \qquad \Sigma_2 = R_3, \qquad
\Sigma_3 = R_4 + R_2,   \qquad
\Sigma_4 = R_5 + 5R_3,   
\end{displaymath} 
\begin{displaymath}   
\Sigma_5 = R_6 + 15R_4 + 5R_2^2 + 8R_2, \qquad
\Sigma_6 = R_7 + 35R_5 + 35R_3 R_2 + 84R_3.
\end{displaymath}
The advantage of Kerov polynomials $K_k$ over polynomials $J_k$ comes from the
fact that they usually have a much simpler form, involve smaller number of
summands and are more suitable for studying asymptotics of characters in the
case of \emph{balanced Young diagrams}, i.e.~for example in the case of
characters $\Sigma^{s\lambda}_k$ of dilated Young diagrams \cite{Biane2003}.

\subsection{The main result: explicit form of character polynomials}

For a permutation $\pi$ we denote by $C(\pi)$ the set of the cycles of $\pi$.
\begin{theorem}[Dołęga, F\'eray, Śniady \cite{DolegaF'eray'Sniady2008}]
\label{theo:rozwiniecieL}
The coefficients of polynomials $J_k$ are explicitly
described as follows:
\begin{displaymath} 
\left. \frac{\partial}{\partial S_{j_1}} \cdots
\frac{\partial}{\partial
S_{j_l}} J_k  \right|_{S_{2}=S_3=\cdots=0}  
\end{displaymath} 
is equal to $(-1)^{l-1}$ times the number of the number of the triples
$(\sigma_1,\sigma_2,\ell)$
where
\begin{itemize}
 \item $\sigma_1,\sigma_2\in\Sym(k)$ are such that $\sigma_1\circ
\sigma_2=(1,2,\dots,k)$,
 \item $\ell:C(\sigma_2)\rightarrow\{1,\dots,l\}$ is a bijective labeling,
 \item for each $1\leq i\leq l$ there are exactly $j_i-1$ cycles of $\sigma_1$
which intersect cycle $\ell^{-1}(i)$ and which do not intersect any of the
cycles $\ell^{-1}(i+1),\ell^{-1}(i+2),\dots$.
\end{itemize}

\end{theorem}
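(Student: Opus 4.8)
The plan is to evaluate the identity $\Sigma_k=J_k(S_2,S_3,\dots)$ on multirectangular Young diagrams $\mathbf{p}\times\mathbf{q}$ and to read off the coefficients of $J_k$ from Stanley's formula by a linearization argument. Recall that Stanley's formula expresses the character $\Sigma_k(\mathbf{p},\mathbf{q})$ as a signed sum over factorizations $\sigma_1\circ\sigma_2=(1,2,\dots,k)$ in $\Sym(k)$: the contribution of a factorization carries the sign $(-1)^{k}(-1)^{|C(\sigma_1)|}$ and a weight obtained by summing, over the admissible colorings of the cycles of $\sigma_1$ and $\sigma_2$ by rectangle indices, a monomial that assigns one height factor $q$ to each cycle of $\sigma_2$ and one width factor $p$ to each cycle of $\sigma_1$. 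Since $\Sigma_k=J_k(S_2,S_3,\dots)$ holds for every Young diagram, and multirectangular coordinates with integer entries are rich enough to force equality of polynomial functions, we obtain the identity $J_k\bigl(S_2(\mathbf{p},\mathbf{q}),S_3(\mathbf{p},\mathbf{q}),\dots\bigr)=\Sigma_k(\mathbf{p},\mathbf{q})$ in which the right-hand side is explicit.

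Next I would compute the fundamental functionals on multirectangular coordinates. Using the integral definition and additivity over the rectangles, a direct computation shows that, for a single rectangle of width $p$ and height $q$, the coefficient of $q\,p^{j-1}$ in $S_j$ equals $1$; thus, to first order in each height, the functional $S_j$ behaves like a sum of rectangular contributions $q\,p^{j-1}$. This identifies the target: the derivative $\partial_{S_{j_1}}\cdots\partial_{S_{j_l}}J_k\big|_{S=0}$ is, up to the bookkeeping of the labeling, the coefficient of the multilinear monomial $S_{j_1}\cdots S_{j_l}$ in $J_k$, and by the previous remark this coefficient is recovered by extracting from $\Sigma_k(\mathbf{p},\mathbf{q})$, written with $l$ distinct rectangles, the coefficient of $\prod_{i=1}^{l} q_i\,p_i^{\,j_i-1}$ taken linearly in each height $q_i$.

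It then remains to match this coefficient with the count of triples. Extracting $\prod_i q_i\,p_i^{\,j_i-1}$ forces $\sigma_2$ to have exactly $l$ cycles, one coloured by each index $i$, which is precisely a bijective labeling $\ell\colon C(\sigma_2)\to\{1,\dots,l\}$; it also forces the $j_i-1$ width factors $p_i$ to come from $j_i-1$ distinct cycles of $\sigma_1$ coloured $i$. The monotonicity constraint built into the admissible colorings of Stanley's formula dictates that a cycle of $\sigma_1$ may receive the colour $i$ exactly when $i$ is the largest label among the cycles of $\sigma_2$ that it intersects, i.e.\ when it intersects $\ell^{-1}(i)$ but none of $\ell^{-1}(i+1),\dots,\ell^{-1}(l)$; demanding exactly $j_i-1$ such cycles for each $i$ reproduces the third condition of the statement. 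The overall sign follows from the genus relation $|C(\sigma_1)|+|C(\sigma_2)|=k+1-2g$: here $|C(\sigma_2)|=l$ and $|C(\sigma_1)|=\sum_i(j_i-1)$, so $(-1)^{k}(-1)^{|C(\sigma_1)|}=(-1)^{l-1}$ independently of the genus $g$.

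The main obstacle is the linearization step. One must prove that extracting the coefficient of $\prod_i q_i\,p_i^{\,j_i-1}$, linear in each $q_i$, genuinely computes the coefficient of $S_{j_1}\cdots S_{j_l}$ rather than a triangular mixture involving products of lower fundamental functionals. This requires controlling the change of variables $(\mathbf{p},\mathbf{q})\mapsto(S_2,S_3,\dots)$: one has to show that the higher-order-in-$q$ parts of each $S_j$ and the off-diagonal rectangular contributions do not pollute the targeted monomial, so that the inversion is genuinely triangular, and that the bijectivity of $\ell$ accounts exactly for the symmetry of the monomial $S_{j_1}\cdots S_{j_l}$ so that no spurious factorials survive. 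I expect this triangular bookkeeping, together with pinning down the precise admissibility rule that yields the ``largest-label'' assignment of the cycles of $\sigma_1$, to be the crux of the proof.
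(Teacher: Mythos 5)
Your strategy coincides with the paper's own proof: write $\Sigma_k=J_k(S_2,S_3,\dots)$, evaluate on multirectangular diagrams, quote the Stanley--F\'eray character formula (Theorem~\ref{theo:stanley-feray-old}), and extract a single multilinear coefficient. Your combinatorial matching is also correct: the free coloring of $C(\sigma_2)$ becomes the bijective labeling $\ell$, the largest-label rule for the cycles of $\sigma_1$ is exactly the third condition of the theorem, and the sign $(-1)^k(-1)^{|C(\sigma_1)|}=(-1)^{\sigma_1}=(-1)^{l-1}$ is right (your $p\leftrightarrow q$ conventions are transposed relative to the paper's, but you use them consistently, so this is harmless). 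Moreover, the ``admissibility rule'' you hedge about at the end is not actually in doubt: it is precisely the max-rule defining $\phi_1$ in Theorem~\ref{theo:stanley-feray-old}, a known result (conjectured by Stanley, proved by F\'eray) that you are entitled to cite.

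The genuine gap is the step you yourself flag as the crux and then defer: the linearization statement that $\partial_{S_{j_1}}\cdots\partial_{S_{j_l}}\F\big|_{S_2=S_3=\cdots=0}$ equals the coefficient of $p_1q_1^{j_1-1}\cdots p_lq_l^{j_l-1}$ in $\F^{\p\times\q}$ (in the paper's letters). This is Theorem~\ref{theo:stanley-and-S}, and without it your argument produces a candidate for the derivative of $J_k$, not the derivative itself. The paper closes this via Lemma~\ref{lem:rozwijam-esy-floresy}: for $i_1<\cdots<i_s$ and $1\leq s\leq k-1$ one has $[p_{i_1}\cdots p_{i_s}]S_k^{\p\times\q}=(-1)^{s-1}(k-1)_{s-1}\,q_{i_s}^{k-s}$, so the full coefficient of any multilinear $p$-monomial is a monomial in the \emph{single} variable $q_{i_s}$ carrying the largest index. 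Granting this, the triangularity you hope for is short: write $\F=S_{m_1}\cdots S_{m_r}$; each factor vanishes at $\p=0$, so each contributes a nonempty set $I_t$ of $p$-indices, these sets partition $\{1,\dots,l\}$, and factor $t$'s contribution involves only $q_{\max I_t}$. Since every $q_i$ must occur in the target (each exponent $j_i-1\geq 1$), at most $r$ distinct $q$-variables can appear, forcing $r=l$ and $|I_t|=1$ for all $t$; each factor then contributes $p_iq_i^{m_t-1}$, and the extracted coefficient counts bijections $t\mapsto i$ with $m_t=j_i$, which is exactly the mixed derivative (this also settles your worry about spurious factorials). Your single-rectangle computation, that the coefficient of $q\,p^{j-1}$ in $S_j$ is $1$, is only the $s=1$ case of the lemma; it is the cases $s\geq 2$ --- needed to show that nothing else can pollute the target monomial --- that your proposal is missing.
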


\begin{theorem}[Dołęga, F\'eray, Śniady \cite{DolegaF'eray'Sniady2008}]
\label{theo:main}
The coefficient of $R_2^{s_2} R_3^{s_3} \cdots $ in
the Kerov polynomial $K_{k}$ is equal to the number of triples
$(\sigma_1,\sigma_2,q)$ with the following properties:
\begin{enumerate}[label=(\alph*)]
 \item \label{enum:first-condition}
$\sigma_1,\sigma_2\in \Sym(k)$ are such that $\sigma_1
\circ \sigma_2=(1,2,\dots,k)$;
 \item \label{enum:ilosc2} the number of cycles of $\sigma_2$ is equal to the
number of\/ factors in
the product $R_2^{s_2} R_3^{s_3} \cdots $; in other words
$|C(\sigma_2)|=s_2+s_3+\cdots$;
 \item \label{enum:boys-and-girls} the total number of cycles of $\sigma_1$ and
$\sigma_2$ is equal to the degree of the product
$R_2^{s_2} R_3^{s_3} \cdots $; in other words
$|C(\sigma_1)|+|C(\sigma_2)|=2 s_2+3 s_3+4 s_4+\cdots$;
 \item \label{enum:kolorowanie} $q:C(\sigma_2)\rightarrow \{2,3,\dots\}$ is a
coloring of the cycles of
$\sigma_2$ with a property that each color $i\in\{2,3,\dots\}$ is used exactly
$s_i$ times (informally, we can think that $q$ is a map which to cycles of
$C(\sigma_2)$ associates the factors in the product $R_2^{s_2} R_3^{s_3}
\cdots$);
 \item \label{enum:marriage} for every set $A\subset C(\sigma_2)$
which is
nontrivial (i.e., $A\neq\emptyset$ and $A\neq C(\sigma_2)$) there are more than
$\sum_{i\in A} \big( q(i)-1 \big) $ cycles of\/ $\sigma_1$ which intersect
$\bigcup A$.
\end{enumerate}
\end{theorem}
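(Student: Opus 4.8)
The plan is to deduce the statement from Theorem~\ref{theo:rozwiniecieL} by carrying out the (triangular, polynomial) change of variables that turns the fundamental functionals $S_2,S_3,\dots$ into the free cumulants $R_2,R_3,\dots$. Since both $J_k$ and $K_k$ evaluate the same function $\Sigma_k$, it suffices to substitute into $J_k(S_2,S_3,\dots)$ the expansion of each $S_j$ as a polynomial in the $R_i$ and to read off the coefficient of $R_2^{s_2} R_3^{s_3}\cdots$. The conceptual reason to expect a clean, sign-free answer is that this change of variables is precisely the passage from moments to free cumulants: the $R_i$ are the free cumulants of the transition measure of $\lambda$, so the substitution is governed by the combinatorics of non-crossing partitions, and free cumulants are exactly the objects in which the alternating signs $(-1)^{l-1}$ of Theorem~\ref{theo:rozwiniecieL} are built to cancel.

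First I would record the combinatorial shape of the inverse change of variables, expressing each $S_j$ as a sum over non-crossing set partitions with one factor of the form $R_{(\cdot)}$ per block; dually, the leading homogeneous part of $J_m$ recovers $R_{m+1}$ through the genus-$0$ (minimal) factorizations of the $(m+1)$-cycle, which is the generating mechanism behind this relation. Substituting these expansions into the formula of Theorem~\ref{theo:rozwiniecieL} produces, for the coefficient of a fixed monomial $R_2^{s_2} R_3^{s_3}\cdots$, a signed sum indexed by a factorization $\sigma_1\circ\sigma_2=(1,2,\dots,k)$ together with the auxiliary non-crossing data coming from the $S_j\mapsto R_i$ substitution. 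After this resummation the bijective labeling $\ell$ of Theorem~\ref{theo:rozwiniecieL} collapses into an unordered coloring $q\colon C(\sigma_2)\to\{2,3,\dots\}$; conditions~\ref{enum:ilosc2}--\ref{enum:kolorowanie} are then pure bookkeeping, matching the number of cycles of $\sigma_2$ with the number of factors, and matching $|C(\sigma_1)|+|C(\sigma_2)|$ with the degree $\sum_i i\,s_i$ of the monomial. Equivalently this forces $|C(\sigma_1)|=\sum_{c\in C(\sigma_2)}\big(q(c)-1\big)$, so that $q(c)-1$ records the number of $\sigma_1$-cycles ``attached'' to the cycle $c$.

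The heart of the argument, and the step I expect to be the main obstacle, is to show that after all the cancellation the surviving contributions appear with coefficient exactly $+1$ and are precisely the triples satisfying the marriage condition~\ref{enum:marriage}. Here I would view the distribution of the $\sigma_1$-cycles among the $\sigma_2$-cycles they intersect, with prescribed loads $q(c)-1$, as a degree-constrained assignment in the bipartite ``intersection'' graph between $C(\sigma_1)$ and $C(\sigma_2)$; by a Hall/flow feasibility criterion, such an assignment is controlled exactly by inequalities of the type in~\ref{enum:marriage}. The equality case is forced for $A=C(\sigma_2)$ by condition~\ref{enum:boys-and-girls} (every $\sigma_1$-cycle meets $\bigcup C(\sigma_2)$), while the strict inequality for proper nonempty $A$ encodes the ``irreducibility'' that characterizes free cumulants as connected quantities. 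Concretely, I would design a sign-reversing involution on the triples violating~\ref{enum:marriage} (or, equivalently, run a M\"obius-inversion argument over the lattice of non-crossing partitions) so that every such triple cancels, leaving the asserted nonnegative count. Verifying that this involution is well defined and fixed-point-free off the set described by~\ref{enum:marriage}, and that no reducible configuration is double counted, is the delicate combinatorial core; the remaining degree and cardinality conditions, as well as the positivity of the final coefficients, then follow formally once the involution is in place.
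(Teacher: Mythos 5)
There is a genuine gap. Your opening moves match the paper's strategy: the paper likewise passes between the $S_j$ and the $R_j$ (Proposition~\ref{prop:relation-between-r-and-s}, whose proof is exactly the non-crossing-partition/minimal-factorization combinatorics you invoke), converts derivatives with respect to free cumulants into derivatives with respect to the $S_j$'s, and then uses Theorem~\ref{theo:stanley-and-S} together with the Stanley--F\'eray formula \eqref{eq:stanley-feray-old} to turn everything into signed sums over factorizations $\sigma_1\circ\sigma_2=(1,2,\dots,k)$. Up to that point your proposal and the paper agree. The problem is what comes next: you write that you ``would design a sign-reversing involution'' (or run a M\"obius inversion over $\NC$) so that all contributions violating condition~\ref{enum:marriage} cancel and the survivors appear with coefficient exactly $+1$, and you yourself flag this as ``the delicate combinatorial core.'' That step is not a technical verification one can defer --- it \emph{is} the theorem. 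In particular, positivity of the Kerov coefficients (Kerov's conjecture) is an immediate consequence, and no simple involution of the kind you describe is known; the signs in Theorem~\ref{theo:rozwiniecieL} are not ``built to cancel'' under the moment--free-cumulant substitution in any formal way, because the marriage/Hall condition is a genuinely new structure that does not appear in the standard $\NC$ moment--cumulant calculus. Asserting the involution's existence is assuming the conclusion.

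It is instructive to compare with what the paper actually does in the one case it proves here (the quadratic terms, Theorem~\ref{theo:quadratic-reformulated}). Even there, the cancellation is not achieved by an abstract involution: it requires (i) the specific linear identity among Stanley-polynomial coefficients of Lemma~\ref{lem:tozsamosci-wielomianow-stanleya}, (ii) the reordering invariance of Corollary~\ref{coro:order-does-not-matter}, and (iii) an inclusion--exclusion over which cycles of $\sigma_2$ fail the intersection bound, where the absence of ``double failures'' is not formal bookkeeping but follows from transitivity of $\langle\sigma_1,\sigma_2\rangle$ combined with the cycle count $|C(\sigma_1)|=j_1+j_2-2$. None of these mechanisms is visible in your plan, and the general case (treated only in the full version \cite{DolegaF'eray'Sniady2008}, where the authors describe it as long and technical) compounds these difficulties. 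To repair the proposal you would need to either construct the claimed involution explicitly --- which would be a result of independent interest --- or replace it with a quantitative inclusion--exclusion argument in the style of Section~\ref{sec:toyexample}, generalizing Lemma~\ref{lem:tozsamosci-wielomianow-stanleya} to handle arbitrary monomials $R_2^{s_2}R_3^{s_3}\cdots$.
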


Only condition \ref{enum:marriage} is rather complicated, therefore we will
provide two equivalent combinatorial conditions below.


\subsection{Marriage and transportation interpretations of condition
\ref{enum:marriage}}

Let $(\sigma_1,\sigma_2,q)$ be a triple which fulfills conditions
\ref{enum:first-condition}--\ref{enum:kolorowanie} of Theorem \ref{theo:main}.
We consider the following polyandrous interpretation of Hall marriage theorem.
Each cycle of $\sigma_1$ will be called a \emph{boy} and each cycle of
$\sigma_2$ will be called a \emph{girl}. For each girl $j\in C(\sigma_2)$ let
$q(j)-1$ be the \emph{desired number of husbands of $j$}. 
We say that a boy $i\in
C(\sigma_1)$ is a possible candidate for a husband for a girl $j\in C(\sigma_2)$
if cycles $i$ and $j$ intersect. Hall marriage theorem applied to our setup says
that there exists an arrangement of marriages $\M:C(\sigma_1)\rightarrow
C(\sigma_2)$ which assigns to each boy his wife (so that each girl $j$ has
exactly $q(j)-1$ husbands) if and only if 
for every set $A\subseteq C(\sigma_2)$ there are
at least $\sum_{i\in A} \big( q(i)-1 \big) $ cycles of\/ $\sigma_1$ which
intersect $\bigcup A$.
As one easily see, the above condition is similar but not identical to
\ref{enum:marriage}. The following Proposition shows the connection between
these two problems.

\begin{proposition}
Condition \ref{enum:marriage} is equivalent to each of the following two
conditions:
\begin{enumerate}[label=(\ee)]
\item \label{enum:marriage-prim} 
for every nontrivial set of girls $A\subset C(\sigma_2)$ (i.e.,
$A\neq\emptyset$ and $A\neq C(\sigma_2)$) there exist two ways of arranging
marriages $\M_p:C(\sigma_1)\rightarrow C(\sigma_2)$, $p\in\{1,2\}$ for
which the corresponding sets of husbands of wives from $A$ are different:
\begin{displaymath} 
\M_1^{-1}(A)\neq \M_2^{-1}(A), 
\end{displaymath}
\end{enumerate}
\begin{enumerate}[label=(\eeee)]
\item \label{enum:transportation}
there exists a strictly positive solution to the following system of equations: 
\begin{description}
 \item[Set of variables]\ \\ $\big\{x_{i,j}: \text{ $i\in C(\sigma_1)$
and $j\in C(\sigma_2)$ are intersecting cycles} \big\}$
 \item[Equations] $\left\{\begin{array}{l} \forall i, \sum_j x_{i,j}=1 \\
\forall j, \sum_i x_{i,j}=q(j)-1 \end{array}\right.$
\end{description}
\end{enumerate}
\end{proposition}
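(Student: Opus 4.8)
The plan is to route both equivalences through a single auxiliary statement, the \emph{edge-usability lemma}: under condition \ref{enum:marriage}, for every pair of intersecting cycles $i\in C(\sigma_1)$ and $j\in C(\sigma_2)$ there is a valid arrangement of marriages $\M$ with $\M(i)=j$. Throughout I write $N(A)$ for the set of cycles of $\sigma_1$ intersecting $\bigcup A$ (the boys adjacent to a girl in $A$) and $d(A)=\sum_{j\in A}\big(q(j)-1\big)$ for the total number of husbands demanded by $A$. Before anything else I would record two structural facts about the bipartite graph $G$ on vertices $C(\sigma_1)\cup C(\sigma_2)$ with edges joining intersecting cycles. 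First, conditions \ref{enum:ilosc2} and \ref{enum:boys-and-girls} give the balance identity $|C(\sigma_1)|=d\big(C(\sigma_2)\big)$, so Hall's inequality is an equality at $A=C(\sigma_2)$; hence condition \ref{enum:marriage} (strict Hall on nontrivial sets) upgrades to the weak inequality $|N(A)|\ge d(A)$ on all sets, and marriages exist by the Hall theorem recalled above. Second — and this is the geometric input I expect to be decisive — the graph $G$ is connected, because $\langle\sigma_1,\sigma_2\rangle$ contains the full cycle $\sigma_1\sigma_2=(1,2,\dots,k)$ and so acts transitively on $\{1,\dots,k\}$, forcing a single orbit.

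For the edge-usability lemma I would use a pre-assignment reduction: fix an edge $(i_0,j_0)$, marry $i_0$ to $j_0$, delete $i_0$, and lower the demand of $j_0$ by one. A short case analysis, splitting on whether $j_0\in A$, shows that the strict inequality of \ref{enum:marriage} degrades to exactly the weak Hall inequality for the reduced instance, which therefore admits a marriage; reinstating $i_0\mapsto j_0$ proves the lemma. Two consequences are then immediate. Averaging, over all edges, the feasible solutions produced by the lemma yields a point of the transportation polytope strictly positive on every coordinate, giving \ref{enum:marriage}$\Rightarrow$\ref{enum:transportation}. For \ref{enum:marriage}$\Rightarrow$\ref{enum:marriage-prim}, fix a nontrivial $A$ and any marriage $\M_1$; since every valid marriage puts exactly $d(A)$ boys into $A$ while strict Hall gives $|N(A)|>d(A)$, some boy $b\in N(A)$ is not a husband of a wife in $A$ under $\M_1$, and the lemma supplies $\M_2$ with $b$ married into $A$, so $\M_1^{-1}(A)\neq\M_2^{-1}(A)$.

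The two converses I would treat separately. For \ref{enum:marriage-prim}$\Rightarrow$\ref{enum:marriage} I argue by contraposition: if strict Hall fails at some nontrivial $A$, then weak Hall (valid because marriages exist) forces $|N(A)|=d(A)$, so every marriage must use all of $N(A)$ as husbands of wives in $A$; thus $\M^{-1}(A)$ is the same set for all $\M$, contradicting \ref{enum:marriage-prim}. For \ref{enum:transportation}$\Rightarrow$\ref{enum:marriage}, take a strictly positive solution $x$ and a nontrivial $A$; summing the column equations over $A$ gives $d(A)=\sum_{i\in N(A)}\sum_{j\in A}x_{i,j}\le|N(A)|$, each adjacent boy contributing at most its supply $1$. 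To make this strict I must exhibit one boy in $N(A)$ that also sends positive mass outside $A$, and this is precisely where connectivity enters: it forbids $A\cup N(A)$ from being a proper union of components, hence produces an edge leaving $A\cup N(A)$, necessarily from a boy of $N(A)$ to a girl outside $A$, on which $x$ is strictly positive.

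The main obstacle is exactly this final strictness step. Weak Hall, feasibility of the transportation problem, and the mere existence of a marriage are all equivalent under the balance identity alone, so the entire content of the proposition is the passage from ``$\ge$'' to ``$>$''. The essential ingredient is the connectivity of $G$ coming from $\sigma_1\sigma_2$ being a single $k$-cycle: without it one builds disconnected counterexamples in which a strictly positive transportation solution coexists with the equality $|N(A)|=d(A)$, so I would be careful to invoke transitivity of $\langle\sigma_1,\sigma_2\rangle$ explicitly at the one place it is genuinely needed.
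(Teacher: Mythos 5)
Your proposal is correct, but there is nothing in this paper to compare it against: the extended abstract states the Proposition without proof (the argument is deferred to the full version \cite{DolegaF'eray'Sniady2008}), so your write-up supplies a proof that the paper omits. Checking it on its own merits, every ingredient holds up. The balance identity $|C(\sigma_1)|=\sum_{j\in C(\sigma_2)}\bigl(q(j)-1\bigr)$ does follow from conditions \ref{enum:ilosc2}--\ref{enum:kolorowanie}; the bipartite cycle graph is connected, since the partition of $\{1,\dots,k\}$ induced by its components is invariant under both $\sigma_1$ and $\sigma_2$ while $\langle\sigma_1,\sigma_2\rangle\ni(1,2,\dots,k)$ acts transitively; the pre-assignment reduction proving your edge-usability lemma is sound (strict Hall absorbs the loss of one boy on nontrivial sets, and balance gives exactly equality on the full set $C(\sigma_2)$, which is the one case outside your stated $j_0\in A$ versus $j_0\notin A$ split and should be listed explicitly); averaging the indicator vectors of one marriage per edge gives an interior point of the transportation polytope, proving \ref{enum:marriage}$\Rightarrow$\ref{enum:transportation}; and in both converses the forced equality $|N(A)|=d(A)$ correctly yields rigidity ($\M^{-1}(A)=N(A)$ for every marriage $\M$) respectively the escape-edge argument yields strictness. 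Your diagnosis that connectivity is the sole genuinely geometric input, needed precisely to upgrade $\geq$ to $>$, is corroborated by the paper itself: in the proof of the toy case (Theorem \ref{theo:quadratic-reformulated}) transitivity of $\langle\sigma_1,\sigma_2\rangle$ is invoked at exactly the analogous moment, to rule out the configuration where both cycles of $\sigma_2$ simultaneously meet too few cycles of $\sigma_1$. One small logical point to tighten: in proving \ref{enum:marriage-prim}$\Rightarrow$\ref{enum:marriage} by contraposition you assume ``weak Hall (valid because marriages exist)''; this needs the preliminary remark that if no marriage exists at all then \ref{enum:marriage-prim} fails vacuously, after which your argument is complete.
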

Note that the possibility of arranging marriages can be
rephrased as existence of a solution to the above system of equations with a
requirement that $x_{i,j}\in\{0,1\}$. 

The system of equations in condition \ref{enum:transportation} can be
interpreted as a transportation problem where each cycle of $\sigma_1$ is
interpreted as a factory which produces a unit of some ware and each cycle $j$
of $\sigma_2$ is interpreted as a consumer with a demand equal to $q(j)-1$. The
value of $x_{i,j}$ is interpreted as amount of ware transported from factory $i$
to the consumer $j$.

\subsection{General conjugacy classes}

An analogue of Theorem \ref{theo:rozwiniecieL} holds true with some minor
modifications also for the analogues of polynomials $J$ giving the values of
characters on general permutations, not just cycles. 

In case of the analogues of the Kerov polynomials giving the values of
characters on more complex permutations $\pi$ than cycles the situation is
slightly more diffcult. Namely, an analogue of Theorem \ref{theo:main} holds
true if the character $\Sigma_\pi$ is replaced by some quantities which behave
like classical cumulants of cycles constituting $\pi$ and the sum on the
right-hand side is taken only over transitive factorizations. Since the
expression of characters in terms of classical cumulants of cycles is
straightforward, we obtain an expression of characters in terms of free
cumulants.

\subsection{Applications of the main result}

The results of this article (Theorem \ref{theo:main} in particular) can be
used to obtain new asymptotic inequalities for characters of the symmetric
groups. This vast topic is outside of the scope of this article and will be
studied in a subsequent paper.

\subsection{Contents of this article}

In this article we shall prove Theorem \ref{theo:rozwiniecieL}. Also,
since the proof of Theorem \ref{theo:main} is rather long and technical
\cite{DolegaF'eray'Sniady2008}, in this overview article we shall highlight just
the main ideas and concentrate on the first non-trivial case of quadratic terms
of Kerov polynomials.

Due to lack of space we were not able to show the full history of the presented
results and to give to everybody the proper credits. For more history and
bibliographical references we refer to the full
version of this article \cite{DolegaF'eray'Sniady2008}.

\section{Ingredients of the proof of the main result}

\subsection{Polynomial functions on the set of Young diagrams}
Surprisingly, the normalized characters $\Sigma_{\pi}^{\lambda}$ can be
extended in a natural way for any generalized Young diagram $\lambda\in\Y$.
The algebra they generate will be called \emph{algebra of polynomial functions
on (generalized) Young diagrams.}  It is well-known that many natural families
of functions on Young diagrams generate the same algebra, for example the family
of free cumulants $(R_k^\lambda)$ or the family of fundamental functionals
$(S_k^\lambda)$.

\subsection{Stanley polynomials}
For two finite sequences of positive real numbers
$\p=(p_1,\dots,p_m)$
and $\q=(q_1,\dots,q_m)$ with $q_1\geq \cdots \geq q_m$ we consider a
multirectangular generalized Young diagram $\p\times \q$,
cf~Figure \ref{fig:multirectangular}. In the case when
$p_1,\dots,p_m,q_1,\dots,q_m$ are natural numbers $\p\times\q$ is a partition
\begin{displaymath}
\mathbf{p}\times \mathbf{q} =
(\underbrace{q_1,\dots,q_1}_{p_1 \text{ times}},
\underbrace{q_2,\dots,q_2}_{p_2 \text{ times}},\dots).  
\end{displaymath} 

\begin{figure}[t]
\centerline{\includegraphics[width=0.6\textwidth]{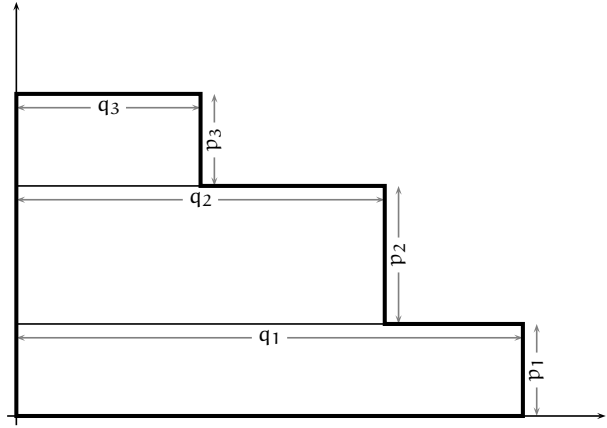}}
\caption{Generalized Young diagram $\p\times \q$ drawn in the French
convention}
\label{fig:multirectangular}
\end{figure}

\begin{proposition}
Let $\F:\Y\rightarrow\R$ be a polynomial function on the set of
generalized Young diagrams. Then $(\p,\q)\mapsto \F(\mathbf
p\times \q)$ 
is a polynomial in indeterminates
$p_1,\dots,p_m,q_1,\dots,q_m$, called \emph{Stanley polynomial}.
\end{proposition}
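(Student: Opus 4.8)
The plan is to reduce everything to the fundamental functionals $S_k$, whose defining integral makes the polynomiality transparent. Recall from the preceding discussion that the algebra of polynomial functions on generalized Young diagrams is generated by the family $(S_k)_{k\geq 2}$. Hence the hypothesis that $\F$ is a polynomial function means exactly that $\F = P(S_2,\dots,S_N)$ for some integer $N$ and some polynomial $P$ with real coefficients, the identity holding for all $\lambda\in\Y$. Since a polynomial composed with polynomials is again a polynomial, it suffices to treat the case $\F = S_k$: once each $(\p,\q)\mapsto S_k^{\p\times\q}$ is shown to be a polynomial in $p_1,\dots,p_m,q_1,\dots,q_m$, substituting these expressions into $P$ shows that $\F(\p\times\q)$ is polynomial as well.

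The heart of the matter is then a direct evaluation of $S_k^{\p\times\q}$ from the formula $S_k^\lambda = (k-1)\iint_{(x,y)\in\lambda}(\contents_{(x,y)})^{k-2}\,dx\,dy$. Writing $P_0 = 0$ and $P_j = p_1+\cdots+p_j$, the region of the generalized diagram $\p\times\q$ splits into the horizontal rectangular strips $R_j = [0,q_j]\times[P_{j-1},P_j]$ for $1\leq j\leq m$; these are disjoint up to boundaries of measure zero, and their union is $\p\times\q$ (the hypothesis $q_1\geq\cdots\geq q_m$ guaranteeing that the resulting profile is non-increasing, as required of a generalized Young diagram). Therefore $S_k^{\p\times\q} = (k-1)\sum_{j=1}^m \iint_{R_j}(x-y)^{k-2}\,dx\,dy$.

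It remains to note that each summand is a polynomial. Integrating the polynomial $(x-y)^{k-2}$ first in $x$ over $[0,q_j]$ and then in $y$ over $[P_{j-1},P_j]$ yields a polynomial in $q_j$, $P_{j-1}$ and $P_j$; since each $P_j$ is a linear form in the $p_i$, this summand is a polynomial in $p_1,\dots,p_m,q_1,\dots,q_m$, and summing over $j$ gives the same conclusion for $S_k^{\p\times\q}$. I expect the only point requiring care to be the passage from integral parameters to arbitrary real values $p_i,q_i\geq 0$. This is not really an obstacle: the strip decomposition and the integral formula for $S_k$ are valid for the generalized Young diagram $\p\times\q$ at all real parameters, so the polynomial identity obtained above holds on the whole of $\R_+^{2m}$ and in particular extends the partition case. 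One should also record that no ordering of the $p_i$ is used anywhere, only the given ordering of the $q_i$, which enters solely to ensure that $\p\times\q$ is a bona fide element of $\Y$.
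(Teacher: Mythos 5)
Your proof is correct and follows essentially the same route as the paper: reduce to the generating family $S_2, S_3,\dots$ of the algebra of polynomial functions, then check polynomiality of $S_k^{\p\times\q}$ directly from its integral definition. The paper dismisses that second step as ``a simple exercise,'' whereas you carry it out explicitly via the decomposition into horizontal strips $[0,q_j]\times[P_{j-1},P_j]$ --- which is in fact the very same computation the paper performs later in the proof of Lemma \ref{lem:rozwijam-esy-floresy}.
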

\begin{proof}
It is enough to prove this proposition for some family of generators of the
algebra of polynomial functions on $\Y$. In the case of functionals
$S_2,S_3,\dots$ it is a simple exercise.
\end{proof}

\begin{lemma}
\label{lem:rozwijam-esy-floresy}
If we treat $\p$ as variables and $\q$ as constants then for every $k\geq 2$ and
all $i_1<\cdots<i_s$
\begin{equation}
\label{eq:rozwiniecie-s-w-stanleya}
 [p_{i_1} \cdots p_{i_s}] S_k^{\p\times \q}= 
\begin{cases}
(-1)^{s-1}\ (k-1)_{s-1}\ q_{i_s}^{k-s} & \text{if\/ $1\leq s\leq k-1$}, \\
0 & \text{otherwise}.
\end{cases}
\end{equation}
\end{lemma}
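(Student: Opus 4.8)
The plan is to compute the Stanley polynomial of $S_k$ on the multirectangular diagram in closed form and then read off its coefficients by differentiation. Throughout put $P_0=0$ and $P_j=p_1+\cdots+p_j$.

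First I would write out the defining integral. In the French convention the region $\p\times\q$ is the union of the horizontal strips on which the row length equals $q_j$, i.e.\ the points $(x,y)$ with $P_{j-1}\le y\le P_j$ and $0\le x\le q_j$. Carrying out the inner integral over $x$ in
\begin{displaymath}
S_k^{\p\times\q}=(k-1)\iint_{(x,y)\in\p\times\q}(x-y)^{k-2}\,dx\,dy
\end{displaymath}
collapses the double integral to a one-dimensional integral over $y$, which splits over the strips. A direct antiderivative computation then yields the closed form
\begin{displaymath}
S_k^{\p\times\q}=\frac{1}{k}\left[\sum_{j=1}^m\Big((q_j-P_{j-1})^k-(q_j-P_j)^k\Big)+(-P_m)^k\right].
\end{displaymath}
This expression is manifestly a polynomial in $p_1,\dots,p_m,q_1,\dots,q_m$, so it is the Stanley polynomial of $S_k$ and coefficient extraction is legitimate.

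Next I would extract $[p_{i_1}\cdots p_{i_s}]$. Since the indices $i_1<\cdots<i_s$ are distinct, this coefficient equals $\partial_{p_{i_1}}\cdots\partial_{p_{i_s}}S_k^{\p\times\q}$ evaluated at $\p=0$. Every summand above has the shape $\pm\tfrac1k\big(c-\sum_{l\in L}p_l\big)^k$ for a constant $c$ and a down-closed index set $L$, and its iterated derivative is $\pm(-1)^s(k-1)_{s-1}\big(c-\sum_{l\in L}p_l\big)^{k-s}\prod_{r=1}^{s}[i_r\in L]$. Because $L$ is either $\{l:l<j\}$ or $\{l:l\le j\}$ and $i_s=\max_r i_r$, the product of indicators reduces to the single condition $[i_s\in L]$, that is to $[j>i_s]$ for the terms $(q_j-P_{j-1})^k$ and to $[j\ge i_s]$ for the terms $(q_j-P_j)^k$.

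Finally, setting $\p=0$ replaces $c$ by $q_j$ (and by $0$ for the boundary term $(-P_m)^k$). Summing, the terms $(q_j-P_{j-1})^k$ contribute $(-1)^s(k-1)_{s-1}\sum_{j>i_s}q_j^{k-s}$ and the terms $(q_j-P_j)^k$ contribute $-(-1)^s(k-1)_{s-1}\sum_{j\ge i_s}q_j^{k-s}$, so everything telescopes and leaves only the single surviving term $-(-1)^s(k-1)_{s-1}q_{i_s}^{k-s}=(-1)^{s-1}(k-1)_{s-1}q_{i_s}^{k-s}$, while the boundary term contributes $(-1)^s(k-1)_{s-1}\,0^{\,k-s}$. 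For $1\le s\le k-1$ the boundary contribution vanishes and one recovers exactly the asserted value. I expect the delicate point---and the one worth stating carefully---to be precisely this boundary term together with the two extreme cases: at $s=k$ its contribution $(-1)^k(k-1)!$ exactly cancels the main term $(-1)^{k-1}(k-1)!$, and for $s>k$ every summand is a degree-$k$ polynomial differentiated more than $k$ times and hence vanishes, so in both cases the coefficient is $0$, matching the ``otherwise'' branch.
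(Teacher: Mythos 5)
Your proof is correct, and it organizes the computation differently from the paper. Writing $P_j=p_1+\cdots+p_j$, the paper first expands $(x-y)^{k-2}$ by the binomial theorem and only then integrates over the horizontal strips, arriving at the double sum $(k-1)!\sum_{1\le r\le k-1}(-1)^{r-1}\sum_j \frac{q_j^{k-r}}{(k-r)!}\cdot\frac{P_j^{r}-P_{j-1}^{r}}{r!}$; the mixed derivative $\partial_{p_{i_1}}\cdots\partial_{p_{i_s}}$ of $(P_j^{r}-P_{j-1}^{r})/r!$ at $\p=0$ is exactly the indicator of $\{s=r \text{ and } i_s=j\}$, so the sum collapses to the claimed single term, and the ``otherwise'' branch comes for free because $r$ ranges only up to $k-1$, so the indicator can never fire once $s\ge k$. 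You instead keep $(x-y)^{k-2}$ intact and integrate directly, obtaining the closed form $S_k^{\p\times\q}=\frac1k\bigl[\sum_j\bigl((q_j-P_{j-1})^k-(q_j-P_j)^k\bigr)+(-P_m)^k\bigr]$, which I verified (including the telescoped boundary term $(-P_m)^k$ coming from the $(-y)^{k-1}$ part of the inner integral), and you then extract coefficients using the down-closedness of the index sets and the difference of two nested sums over $j$. What your route buys is an explicit compact formula for the Stanley polynomial of $S_k$, of some independent interest; what it costs is that the degenerate cases are no longer automatic: you need the boundary term and the cancellation $(-1)^{k-1}(k-1)!+(-1)^k(k-1)!=0$ to dispose of $s=k$, and a degree argument for $s>k$. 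You identified precisely this as the delicate point and handled it correctly, so both arguments are complete; yours trades the paper's clean term-by-term indicator for a telescoping argument plus one boundary cancellation.
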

\begin{proof} 
The integral over the Young diagram $\p\times\q$ can be split into several
integrals over rectangles constituting $\p\times\q$ therefore
\begin{multline*} 
S_k^{\p\times\q}= (k-1)
\iint_{(x,y)\in\p\times\q}(x-y)^{k-2} \ dx\ dy =\\
(k-1)! \sum_{1\leq r\leq k-1} (-1)^{r-1} \iint_{(x,y)\in\p\times\q}
\frac{x^{k-1-r}}{(k-1-r)!} \frac{y^{r-1}}{(r-1)!}\ dx \ dy = \\ 
(k-1)! \sum_{1\leq r\leq k-1} (-1)^{r-1} \sum_j
\frac{q_j^{k-r}}{(k-r)!}
\frac{(p_1+\cdots+p_j)^{r}-(p_1+\cdots+p_{j-1})^{r}}{r!}.
\end{multline*}
For any $i_1<\cdots<i_s$
\begin{displaymath}
\left.\frac{\partial^s}{\partial p_{i_1} \cdots \partial
p_{i_s}}\frac{(p_1+\cdots+p_j)^{r}-(p_1+\cdots+p_{j-1})^{r} } { r! }
\right|_{p_1=p_2=\cdots=0} =
\begin{cases} 1 & \text{if $s=r$ and $i_s=j$,} \\ 0 & \text{otherwise}
\end{cases}
\end{displaymath}
which finishes the proof.
\end{proof}

\begin{theorem}
\label{theo:stanley-and-S}
Let $\F:\Y\rightarrow\R$ be a polynomial function on the set of
generalized Young diagrams, we shall view it as a polynomial in $S_2,S_3,\dots$
Then for any $j_1,\dots,j_l\geq 2$
\begin{displaymath} 
\left. \frac{\partial}{\partial S_{j_1}} \cdots
\frac{\partial}{\partial S_{j_l}} \F \right|_{S_2=S_3=\cdots=0} = 
[p_1 q_1^{j_1-1} \cdots p_l q_l^{j_l-1}] \F^{\p \times \q}. 
\end{displaymath}
\end{theorem}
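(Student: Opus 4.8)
The plan is to reduce to the case of a single monomial and then to compute the two sides directly from Lemma \ref{lem:rozwijam-esy-floresy}. Both operations in play are linear in $\F$: the differential operator $\frac{\partial}{\partial S_{j_1}}\cdots\frac{\partial}{\partial S_{j_l}}$ on the left is obviously linear, and on the right the substitution $S_k\mapsto S_k^{\p\times\q}$ is an algebra homomorphism (so $\F\mapsto\F^{\p\times\q}$ is linear), followed by the linear coefficient-extraction $[p_1 q_1^{j_1-1}\cdots p_l q_l^{j_l-1}]$. Hence it suffices to check the identity for $\F=S_{k_1}\cdots S_{k_r}$ with all $k_a\geq 2$.

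For such a monomial the left-hand side is immediate. Writing the monomial as $\prod_t S_t^{m_t}$, the iterated derivative $\frac{\partial}{\partial S_{j_1}}\cdots\frac{\partial}{\partial S_{j_l}}(S_{k_1}\cdots S_{k_r})\big|_{S=0}$ is nonzero only when $r=l$ and the multisets $\{k_1,\dots,k_r\}$ and $\{j_1,\dots,j_l\}$ coincide, in which case it equals $\prod_t m_t!$, where $m_t$ is the common multiplicity of $t$. The task is then to show that the right-hand side yields exactly this same quantity.

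For the right-hand side I would expand $\F^{\p\times\q}=\prod_{a=1}^r S_{k_a}^{\p\times\q}$ and first extract the coefficient of $p_1\cdots p_l$, treating the $q_i$ as constants. Since $p_1\cdots p_l$ is squarefree, each factor must supply a squarefree product $\prod_{i\in I_a}p_i$ for disjoint sets $I_1,\dots,I_r$ whose union is $\{1,\dots,l\}$; by Lemma \ref{lem:rozwijam-esy-floresy} the contribution of the $a$-th factor is $(-1)^{|I_a|-1}\,(k_a-1)_{|I_a|-1}\,q_{\max I_a}^{\,k_a-|I_a|}$, which vanishes whenever $I_a=\emptyset$. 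The decisive feature is that this contribution involves only the single variable $q_{\max I_a}$, so a partition $(I_a)$ produces a $q$-monomial supported on at most $r$ distinct variables. To match the target $q_1^{j_1-1}\cdots q_l^{j_l-1}$, in which every $q_i$ occurs with strictly positive exponent (as $j_i-1\geq 1$), all $l$ variables must appear; this forces $r=l$ and each $I_a$ to be a singleton.

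When every $I_a=\{i_a\}$ is a singleton the sign and falling factorial degenerate to $1$, and the $a$-th factor contributes precisely $q_{i_a}^{\,k_a-1}$. Matching $\prod_a q_{i_a}^{\,k_a-1}$ with $\prod_i q_i^{\,j_i-1}$ then amounts to counting bijections $\phi$ with $k_a=j_{\phi(a)}$ for all $a$, and this number is again $\prod_t m_t!$ when the multisets agree and $0$ otherwise, each bijection contributing coefficient $+1$. The two sides therefore agree. I expect the only delicate point to be the bookkeeping in this last stage—confirming that none but the singleton partitions survive and that all surviving coefficients are exactly $+1$—which is precisely where the special shape of Lemma \ref{lem:rozwijam-esy-floresy}, namely its dependence on $\max I_a$ alone, does the essential work.
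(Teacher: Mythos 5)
Your proof is correct and follows essentially the same route as the paper's: reduce to a monomial $\F=S_{k_1}\cdots S_{k_r}$ by linearity, observe that the left-hand side counts bijections matching $(k_1,\dots,k_r)$ to $(j_1,\dots,j_l)$, and use Lemma \ref{lem:rozwijam-esy-floresy} to show the right-hand side counts the same thing. The only difference is that the paper states the right-hand computation in one sentence, whereas you carry it out in full — in particular your observation that the contribution of each factor involves only $q_{\max I_a}$, forcing all the sets $I_a$ to be singletons, is exactly the detail the paper leaves to the reader.
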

\begin{proof}
By linearity is enough to consider the case when $\F=S_{m_1} \cdots S_{m_r}$.
Clearly, the left hand side is equal to the number of permutations of the
sequence $(m_1,\dots,m_r)$ which are equal to the sequence $(j_1,\dots,j_l)$.
Lemma \ref{lem:rozwijam-esy-floresy} shows that the same holds true for the
right-hand side.
\end{proof}

\begin{corollary}
\label{coro:order-does-not-matter}
If $j_1,\dots,j_l\geq 2$ then 
\begin{displaymath} 
[p_1 q_1^{j_1-1} \cdots p_l q_l^{j_l-1}] \F ^{\p \times \q}
\end{displaymath} 
does not depend on the order of the elements of the sequence $(j_1,\dots,j_l)$.
\end{corollary}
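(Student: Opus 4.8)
The plan is to deduce this corollary directly from Theorem~\ref{theo:stanley-and-S}, which I may assume. The key observation is that the quantity whose order-independence we wish to establish is precisely the right-hand side of the formula in that theorem, namely
\begin{displaymath}
[p_1 q_1^{j_1-1} \cdots p_l q_l^{j_l-1}] \F^{\p\times\q}.
\end{displaymath}
Theorem~\ref{theo:stanley-and-S} equates this expression with the iterated partial derivative
\begin{displaymath}
\left. \frac{\partial}{\partial S_{j_1}} \cdots \frac{\partial}{\partial S_{j_l}} \F \right|_{S_2=S_3=\cdots=0},
\end{displaymath}
and the latter is manifestly symmetric in $j_1,\dots,j_l$, since partial derivatives with respect to distinct (or even repeated) indeterminates commute. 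So the first and essentially only step is to invoke the theorem to rewrite the coefficient extraction as a symmetric differential operator applied to $\F$.

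The argument then closes in one line: permuting the sequence $(j_1,\dots,j_l)$ only reorders the partial derivatives $\partial/\partial S_{j_1}, \dots, \partial/\partial S_{j_l}$, and because mixed partials commute (as $\F$ is a polynomial in the $S_k$, hence infinitely differentiable), the reordered operator yields exactly the same result before and after setting $S_2=S_3=\cdots=0$. Consequently the right-hand coefficient, being equal to this invariant quantity, is itself independent of the order of the $j_i$.

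I do not anticipate any genuine obstacle here, as the corollary is an immediate formal consequence of the theorem it follows; the only point requiring a moment's care is to confirm that the hypotheses of Theorem~\ref{theo:stanley-and-S} are met, i.e.\ that each $j_i\geq 2$, which is exactly the standing assumption of the corollary. One subtlety worth noting is that the left-hand side of the theorem is symmetric \emph{as stated}, so strictly speaking the symmetry is already packaged into the equality; the corollary simply makes explicit that this symmetry transfers to the combinatorial/coefficient side, where it is far from obvious a priori. This is the conceptual content worth emphasising: the Stanley-polynomial coefficient has no visible symmetry in the $j_i$, yet the bridge to differential operators on $\F$ forces it.
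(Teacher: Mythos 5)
Your proof is correct and is exactly the argument the paper intends: the corollary is stated as an immediate consequence of Theorem~\ref{theo:stanley-and-S}, since the iterated derivative $\partial/\partial S_{j_1}\cdots\partial/\partial S_{j_l}$ is symmetric under reordering of the indices, so the coefficient it equals must be as well. Nothing is missing; applying the theorem to a permuted sequence and invoking commutativity of mixed partials is the whole proof.
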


\subsection{Stanley polynomials for characters}
The following theorem gives explicitly the Stanley polynomial for normalized
characters of symmetric groups. It was conjectured by Stanley
\cite{Stanley-preprint2006} and
proved by F\'eray \cite{F'eray-preprint2006}
and therefore we refer to it as Stanley-F\'eray character formula. 
For a more elementary proof we refer to \cite{F'eray'Sniady-preprint2007}.
\begin{theorem}
\label{theo:stanley-feray-old}
The value of the normalized character on $\pi\in \Sym(k)$ for a
multirectangular Young diagram $\p\times\q$\/ for\/ $\p=(p_1,\dots,p_r)$,
$\q=(q_1,\dots,q_r)$ is given by 
\begin{equation}
\label{eq:stanley-feray-old}
\Sigma^{\p\times \q}_{\pi}=
\sum_{\substack{\sigma_1,\sigma_2\in\Sym(k)\\ \sigma_1 \circ \sigma_2=\pi}}
\ \sum_{\phi_2:C(\sigma_2)\rightarrow\{1,\dots,r\}}
(-1)^{\sigma_1} \left[\prod_{b\in C(\sigma_1)} q_{\phi_1(b)} \prod_{c\in
C(\sigma_2)}
p_{\phi_2(c)}\right],
\end{equation}
where $\phi_1:C(\sigma_1)\rightarrow\{1,\dots,r\}$ is defined by 
\begin{displaymath} 
\phi_1(c) = \max_{\substack{b\in C(\sigma_2), \\ \text{$b$ and $c$
intersect}}} \phi_2(b).
\end{displaymath}
\end{theorem}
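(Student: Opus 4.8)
The plan is to prove \eqref{eq:stanley-feray-old} in three moves: reduce to genuine Young diagrams, record the normalized character on an arbitrary genuine diagram as a signed count of ``fillings,'' and then specialize that count to the multirectangular shape. First I would reduce to the case where all $p_i,q_i$ are positive integers. By the Proposition on Stanley polynomials the map $(\p,\q)\mapsto\Sigma^{\p\times\q}_\pi$ is a polynomial in $p_1,\dots,p_r,q_1,\dots,q_r$, and the right-hand side is manifestly a polynomial in the same indeterminates; since the integer points with $p_i\ge 1$ and $q_1>\cdots>q_r$ are Zariski dense, it suffices to verify the identity there, where $\p\times\q$ is an honest partition and $\Sigma^{\p\times\q}_\pi$ carries its representation-theoretic meaning.

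The heart of the argument is a filling model valid for an arbitrary genuine diagram $\lambda$:
\[
\Sigma^\lambda_\pi=\sum_{\substack{\sigma_1,\sigma_2\in\Sym(k)\\ \sigma_1\circ\sigma_2=\pi}}(-1)^{\sigma_1}\,N_\lambda(\sigma_1,\sigma_2),
\]
where $N_\lambda(\sigma_1,\sigma_2)$ is the number of maps $f\colon\{1,\dots,k\}\to\{\text{boxes of }\lambda\}$ that send the elements of each cycle of $\sigma_1$ into a common column and the elements of each cycle of $\sigma_2$ into a common row. Granting this, the specialization to $\lambda=\p\times\q$ is a routine count. The rows of $\p\times\q$ split into $r$ blocks of sizes $p_1,\dots,p_r$, so fixing the common row of a $\sigma_2$-cycle $c$ is the same as fixing its block $\phi_2(c)\in\{1,\dots,r\}$ together with one of the $p_{\phi_2(c)}$ rows of that block, which accounts for the factor $\prod_{c\in C(\sigma_2)}p_{\phi_2(c)}$. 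A $\sigma_1$-cycle $b$ must sit in a single column whose index is at most the length $q_{\phi_2(c)}$ of the row of every $\sigma_2$-cycle $c$ that meets $b$; as $q_1\ge\cdots\ge q_r$, this means the index is at most $q_{\phi_1(b)}$ with $\phi_1(b)=\max_{c\text{ meets }b}\phi_2(c)$, leaving exactly $q_{\phi_1(b)}$ admissible columns and producing $\prod_{b\in C(\sigma_1)}q_{\phi_1(b)}$. Summing over the block-assignments $\phi_2$ turns the filling model into \eqref{eq:stanley-feray-old}.

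The main obstacle is thus establishing the filling model with the correct sign. I would obtain it by writing $\Sigma^\lambda_\pi$ as the trace of $\pi$ on a module of fillings/tabloids attached to $\lambda$, expanding that trace, and performing an inclusion--exclusion over the partition lattice that records which permuted elements are forced to share a row or a column: the M\"obius function of this lattice is precisely what forces a factorization $\sigma_1\circ\sigma_2=\pi$ and weights it by $(-1)^{\sigma_1}=(-1)^{k-|C(\sigma_1)|}$, the sign being most transparent via a sign-reversing involution that cancels all non-minimal configurations (equivalently, one may pair the Jacobi--Trudi expansion of $s_\lambda$ against power sums and cancel by a lattice-path argument). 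Isolating the right module and organizing this cancellation so that only the factorizations survive, each with weight exactly $(-1)^{\sigma_1}$, is where the real difficulty lies; a viable alternative is to prove the filling model by induction on $k$, deriving a recurrence from modifying the cycle type of $\pi$ and checking that the signed sum satisfies the same recurrence.
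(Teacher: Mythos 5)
The paper never proves Theorem \ref{theo:stanley-feray-old} at all: it quotes it as an external result (conjectured by Stanley, proved by F\'eray, with a more elementary proof in the cited F\'eray--\'Sniady paper), so your attempt has to stand on its own merits. Your first and third moves do stand: the reduction to integer $p_i,q_i$ via polynomiality of both sides is sound, and the specialization of your ``filling model'' to $\p\times\q$ is exactly right --- choosing a row for a $\sigma_2$-cycle $c$ amounts to choosing the block $\phi_2(c)$ and one of $p_{\phi_2(c)}$ rows in it, and since $q_1\geq\cdots\geq q_r$ the admissible columns for a $\sigma_1$-cycle $b$ number $q_{\phi_1(b)}$ with $\phi_1(b)=\max\phi_2(c)$ over intersecting $c$. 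This is the standard equivalence between Stanley's multirectangular form of the formula and the coordinate-free ``embedding'' form valid for arbitrary diagrams.

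But that is precisely the problem: your step two,
\[
\Sigma^\lambda_\pi=\sum_{\substack{\sigma_1,\sigma_2\in\Sym(k)\\ \sigma_1\circ\sigma_2=\pi}}(-1)^{\sigma_1}\,N_\lambda(\sigma_1,\sigma_2),
\]
is not a lemma on the way to the theorem; it \emph{is} the theorem, restated in an equivalent form. Everything you offer for it is a list of candidate techniques rather than an argument: a trace over an unspecified module of fillings, an inclusion--exclusion over the partition lattice whose M\"obius function you assert --- with no computation --- magically produces factorizations $\sigma_1\circ\sigma_2=\pi$ weighted by $(-1)^{\sigma_1}$, a sign-reversing involution you do not construct, a Jacobi--Trudi/lattice-path cancellation you do not set up, or ``alternatively'' an induction on $k$ whose recurrence is never written down. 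You flag this yourself as ``where the real difficulty lies,'' and indeed it is: the signed cancellation down to factorizations of $\pi$ is the entire content of the result (it required Stanley's conjecture plus a separate paper of F\'eray to establish). As written, the proposal is a correct reduction of the theorem to itself, and the gap is the theorem.
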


Notice that Theorem \ref{theo:stanley-and-S} and the above Theorem
\ref{theo:stanley-feray-old} give immediately the proof of Theorem
\ref{theo:rozwiniecieL}.

\subsection{Relation between free cumulants and fundamental functionals}

\begin{corollary}
\label{theo:stanley-feray-rrr}
The value of the $k$-th free cumulant for a
multirectangular Young diagram $\p\times\q$\/ for\/ $\p=(p_1,\dots,p_r)$,
$\q=(q_1,\dots,q_r)$ is given by 
\begin{equation}
\label{eq:stanley-feray-rrr}
R^{\p\times \q}_{k}=
\sum_{\substack{\sigma_1,\sigma_2\in\Sym(k-1)\\ \sigma_1 \circ
\sigma_2=(1,2,\dots,k-1)\\ |C(\sigma_1)|+|C(\sigma_2)|=k }}
\ \sum_{\phi_2:C(\sigma_2)\rightarrow\{1,\dots,r\}}
(-1)^{\sigma_1} \left[\prod_{b\in C(\sigma_1)} q_{\phi_1(b)} \prod_{c\in
C(\sigma_2)}
p_{\phi_2(c)}\right],
\end{equation}
where $\phi_1:C(\sigma_1)\rightarrow\{1,\dots,r\}$ is defined as in Theorem
\ref{theo:stanley-feray-old}.
\end{corollary}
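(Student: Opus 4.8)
The plan is to read off $R_k^{\p\times\q}$ directly from the Stanley--F\'eray formula (Theorem~\ref{theo:stanley-feray-old}) by exploiting the defining property of free cumulants as leading coefficients of the dilation polynomial. Recall that $R_k^\lambda=[s^k]\,\Sigma^{s\lambda}_{k-1}$, so it suffices to expand $\Sigma^{s\lambda}_{k-1}$ as a polynomial in $s$ for $\lambda=\p\times\q$ and to isolate the coefficient of $s^k$. The whole argument is then just a bookkeeping of degrees grafted onto the known character formula.

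First I would record the elementary scaling identity $s(\p\times\q)=(s\p)\times(s\q)$, where $s\p=(sp_1,\dots,sp_m)$ and $s\q=(sq_1,\dots,sq_m)$. This is immediate from the description of $\p\times\q$ as the partition in which each height $q_i$ occurs with multiplicity $p_i$: dilating by $s$ multiplies every part by $s$ and repeats every row $s$ times, so the pair $(p_i,q_i)$ becomes $(sp_i,sq_i)$. For generalized diagrams and real $s\ge 0$ the identity persists by polynomiality.

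Next I would apply Theorem~\ref{theo:stanley-feray-old} to the cycle $\pi=(1,2,\dots,k-1)\in\Sym(k-1)$ and the diagram $(s\p)\times(s\q)$. Each summand there is the monomial $(-1)^{\sigma_1}\prod_{b\in C(\sigma_1)}q_{\phi_1(b)}\prod_{c\in C(\sigma_2)}p_{\phi_2(c)}$, indexed by a factorization $\sigma_1\circ\sigma_2=(1,\dots,k-1)$ and a labeling $\phi_2:C(\sigma_2)\to\{1,\dots,r\}$ (with $\phi_1$ determined by $\phi_2$). The substitution $p_i\mapsto sp_i$, $q_i\mapsto sq_i$ multiplies such a monomial by $s^{|C(\sigma_1)|+|C(\sigma_2)|}$, since it carries exactly $|C(\sigma_1)|$ factors of type $q$ and $|C(\sigma_2)|$ factors of type $p$. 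Hence
$$\Sigma^{(s\p)\times(s\q)}_{k-1}=\sum_{\substack{\sigma_1,\sigma_2\in\Sym(k-1)\\ \sigma_1\circ\sigma_2=(1,\dots,k-1)}}\ \sum_{\phi_2}(-1)^{\sigma_1}\,s^{|C(\sigma_1)|+|C(\sigma_2)|}\Big[\prod_{b\in C(\sigma_1)}q_{\phi_1(b)}\prod_{c\in C(\sigma_2)}p_{\phi_2(c)}\Big].$$

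Finally I would extract $[s^k]$. Since this is a finite sum of monomials in $s$, the coefficient of $s^k$ simply retains those triples with $|C(\sigma_1)|+|C(\sigma_2)|=k$, which is precisely the constraint appearing in the sum of \eqref{eq:stanley-feray-rrr}; this yields the claimed formula. I do not anticipate a genuine obstacle: the only point needing care is the scaling identity $s(\p\times\q)=(s\p)\times(s\q)$, and that is a direct consequence of the definition of dilation. Everything else is the observation that each monomial in the Stanley polynomial is homogeneous of degree $|C(\sigma_1)|+|C(\sigma_2)|$ under the simultaneous rescaling of the $p_i$ and $q_i$, so that the free cumulant $R_k$ collects exactly the monomials of total degree $k$.
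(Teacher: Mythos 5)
Your proof is correct and is essentially the paper's own argument: the paper extracts the homogeneous part of degree $k$ (in the variables $p_i,q_i$ jointly) from both sides of \eqref{eq:stanley-feray-old} for $\pi=(1,\dots,k-1)$, which is exactly what your dilation bookkeeping $s(\p\times\q)=(s\p)\times(s\q)$ together with extraction of $[s^k]$ accomplishes, since each monomial has joint degree $|C(\sigma_1)|+|C(\sigma_2)|$. You have merely made explicit the link between homogeneity under dilation and the definition $R_k^\lambda=[s^k]\,\Sigma^{s\lambda}_{k-1}$ that the paper leaves implicit.
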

\begin{proof}
It is enough to consider the homogeneous part with degree $k$ of both sides of
\eqref{eq:stanley-feray-old}  for $\pi=(1,\dots,k-1)\in\Sym(k-1)$.
\end{proof}

\begin{proposition}
\label{prop:relation-between-r-and-s}
For any integer $n\geq 2$
\begin{displaymath}R_{k} =
 \sum_{l\geq 1} \frac{1}{l!} (-k+1)^{l-1} \sum_{\substack{j_1,\dots,j_l\geq 2 \\
j_1+\cdots+j_l=k }} S_{j_1} \cdots S_{j_l}.\end{displaymath}
\end{proposition}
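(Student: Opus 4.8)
The plan is to prove the identity by comparing, for both sides, all the partial derivatives $\left.\frac{\partial}{\partial S_{j_1}}\cdots\frac{\partial}{\partial S_{j_l}}(\cdot)\right|_{S_2=S_3=\cdots=0}$ with $j_1,\dots,j_l\geq 2$. Both $R_k$ and the right-hand side are polynomials in $S_2,S_3,\dots$, hence polynomial functions on $\Y$, so by Theorem~\ref{theo:stanley-and-S} it suffices to check that these derivatives agree for every choice of $(j_1,\dots,j_l)$. For the right-hand side the computation is elementary and follows exactly the matching-count argument used to prove Theorem~\ref{theo:stanley-and-S}: only the summand with exactly $l$ factors survives the derivative, the number of matchings of the $l$ derivative variables with the $l$ factors equals $l!$, and this cancels the prefactor $\frac{1}{l!}$. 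Hence the derivative of the right-hand side equals $(1-k)^{l-1}$ when $j_1+\cdots+j_l=k$ and vanishes otherwise.

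First I would compute the same derivative for $R_k$. By Theorem~\ref{theo:stanley-and-S} it equals the coefficient $[p_1q_1^{j_1-1}\cdots p_lq_l^{j_l-1}]R_k^{\p\times\q}$, which I read off from Corollary~\ref{theo:stanley-feray-rrr}. Extracting $p_1\cdots p_l$ forces $|C(\sigma_2)|=l$ and the labeling $\phi_2\colon C(\sigma_2)\to\{1,\dots,l\}$ to be a bijection; extracting $q_i^{j_i-1}$ forces, for each $i$, exactly $j_i-1$ cycles $b$ of $\sigma_1$ to satisfy $\phi_1(b)=i$. The crucial observation is that the degree condition $|C(\sigma_1)|+|C(\sigma_2)|=k$ built into Corollary~\ref{theo:stanley-feray-rrr} selects precisely the \emph{minimal} (genus~$0$) factorizations of the $(k-1)$-cycle: for such factorizations $\sigma_1$ is the Kreweras complement of $\sigma_2$, one has $|C(\sigma_1)|=k-l$, and the sign is constant, $(-1)^{\sigma_1}=(-1)^{(k-1)-(k-l)}=(-1)^{l-1}$. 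Consequently the signed sum collapses to $(-1)^{l-1}N$, where $N=N(j_1,\dots,j_l)$ is the number of pairs $(\sigma_2,\phi_2)$ with $\sigma_2$ a non-crossing partition of $\{1,\dots,k-1\}$ into $l$ labelled blocks whose Kreweras complement has exactly $j_i-1$ blocks of maximal intersecting label $i$ for each $i$. This count is automatically zero unless $\sum_i(j_i-1)=k-l$, i.e. $\sum_i j_i=k$, which matches the vanishing on the right-hand side.

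It remains to prove the single combinatorial identity $N(j_1,\dots,j_l)=(k-1)^{l-1}$, and this is the main obstacle. By Corollary~\ref{coro:order-does-not-matter} the quantity $N$ is already known to be independent of the order of $(j_1,\dots,j_l)$, but the real content is the stronger fact that $N$ does not depend on the values $j_i$ at all, only on $k$ and $l$. The form $(k-1)^{l-1}$ strongly suggests a Cayley-type enumeration, so I would attempt to establish it by exhibiting a bijection between the pairs $(\sigma_2,\phi_2)$ counted by $N$ and a set of cardinality $(k-1)^{l-1}$, such as labelled forests on $l$ vertices or an appropriate family of parking-function-like sequences, designed so that the prescribed fiber sizes $j_i-1$ encode local out-degrees and therefore do not affect the total; failing a clean bijection, an induction on $l$ using the Kreweras-complement recursion, or a transfer/generating-function argument on non-crossing partitions, should deliver the same value. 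Once $N=(k-1)^{l-1}$ is known, the derivative of $R_k$ equals $(-1)^{l-1}(k-1)^{l-1}=(1-k)^{l-1}$ whenever $\sum_i j_i=k$ and vanishes otherwise, agreeing with the right-hand side and completing the proof. As a cross-check, writing $\sum_{\sum m_i=k}S_{m_1}\cdots S_{m_l}=[z^k]F(z)^l$ with $F(z)=\sum_{j\geq 2}S_jz^j$ recasts the Proposition as the compact identity $R_k=\frac{1}{1-k}[z^k]\exp\big((1-k)F(z)\big)$, a form amenable to Lagrange inversion that offers an alternative route to the same conclusion.
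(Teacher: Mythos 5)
Your reduction is correct and is in fact the same strategy the paper uses: compare mixed derivatives in the $S_j$ via Theorem~\ref{theo:stanley-and-S}, read off the coefficient of $p_1q_1^{j_1-1}\cdots p_lq_l^{j_l-1}$ from Corollary~\ref{theo:stanley-feray-rrr}, observe that the constraint $|C(\sigma_1)|+|C(\sigma_2)|=k$ singles out minimal factorizations of the $(k-1)$-cycle (equivalently, non-crossing partitions with $\sigma_1$ the Kreweras complement of $\sigma_2$), and note that the sign is then constant equal to $(-1)^{l-1}$. Your computation of the derivative of the right-hand side, including the cancellation of the $\frac{1}{l!}$ against the $l!$ matchings, is also correct. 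Up to this point everything checks out, and it goes slightly further than the extended abstract, which carries out this bookkeeping only for $l\le 2$.

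However, the proof is not complete: the identity $N(j_1,\dots,j_l)=(k-1)^{l-1}$ --- that the number of labelled non-crossing partitions of $\{1,\dots,k-1\}$ into $l$ blocks whose Kreweras complement has prescribed max-label fiber sizes $j_i-1$ depends only on $k$ and $l$, and equals $(k-1)^{l-1}$ --- is precisely the technical heart of the proposition, and you do not prove it. You correctly identify it as ``the main obstacle'' but then only list candidate strategies (a bijection to labelled forests or parking-function-like objects, induction via the Kreweras recursion, Lagrange inversion) without executing any of them; ``should deliver the same value'' is not an argument. For comparison, the paper is explicit that it proves only the weaker form \eqref{eq:rands}: it settles $l=1$ (the unique one-block non-crossing partition) and $l=2$ (all admissible two-block labelled non-crossing partitions are related by cyclic rotation, giving exactly $k-1$), and defers the general $l$ to the full version as ``analogous but more technically involved.'' So your attempt reproduces the paper's reduction in full generality but stops exactly where the real combinatorial work begins; to close the gap you would need to actually carry out one of your proposed enumerations, e.g.\ the rotation/forest argument generalizing the paper's $l=2$ case, or make the Lagrange-inversion reformulation at the end into a genuine proof.
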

Before the proof notice that the above formula shows that free cumulants can be
explicitly and directly calculated from the shape of a Young diagram.

\begin{proof}
For simplicity, we shall proof a weaker form of this result, namely
\begin{equation} 
\label{eq:rands}
R_k = S_k - \frac{k-1}{2} \sum_{j_1+j_2=k} S_{j_1} S_{j_2} +
\text{(terms involving at least three factors $S_j$)}. 
\end{equation}

Theorem \ref{theo:stanley-and-S} shows that
the expansion of $R_k$ in terms of $(S_j)$ involves coefficients of Stanley
polynomials and the latter are given by Corollary \ref{theo:stanley-feray-rrr}.
We shall use this idea in the following.

Notice that the condition $|C(\sigma_1)|+|C(\sigma_2)|=k$ appearing in
\eqref{eq:stanley-feray-rrr} is equivalent to $|\sigma_1|+|\sigma_2| = |\sigma_1
\circ \sigma_2|$ where $|\pi|$ denotes the length of the permutation, i.e.~the
minimal number of factors necessary to write $\pi$ as a product of
transpositions. In other words, $\pi_1\circ \pi_2=(1,\dots,k-1)$ is a
\emph{minimal factorization} of a cycle. Such factorizations are in a bijective
correspondence with non-crossing partitions of $k-1$-element set
\cite{Biane1996}. It is therefore enough to enumerate appropriate non-crossing
partitions. We present the details of this reasoning below.

The linear term $[S_k]R_k=[p_1 q_1^{k-1}] R^{\p\times\q}_k$ is equal to the
number of minimal factorizations such that $\sigma_2$ consists of one cycle and
$\sigma_1$ consists of $k-1$ cycles. Such factorizations corresponds
to non-crossing partitions of $k-1$ element set which have exactly one block
and clearly there is only one such partition. 

Since both free cumulants $(R_j)$ and fundamental functionals of shape are
homogeneous, by comparing the degrees we see that $[S_j]R_k=0$ if $j\neq k$.
The same argument shows that $[S_{j_1} S_{j_2}] R_k=0$ if $j_1+j_2\neq k$.

Instead of finding the quadratic terms $[S_{j_1} S_{j_2}] R_k$ is better to
find the derivative $ \left. \frac{\partial^2}{\partial S_{j_1} \partial
S_{j_2}} R_k \right|_{S_{j_1}=S_{j_2}=0} $ since it better takes care of the
symmetric case $j_1=j_2$. The latter derivative is equal (up to the sign) to the
number of minimal factorizations such that $\sigma_2$ consists of two labeled
cycles $c_1,c_2$ and $\sigma_1$ consists of $k-2$ cycles. Furthermore, we
require that there are $j_2-1$ cycles of $\sigma_1$ which intersect cycle $c_2$.
This is equivalent to counting non-crossing partitions of $k-1$-element set
which consist of two labeled blocks $b_1,b_2$ and we require that the block
$b_2$ consists of $j_2-1$ elements. It is easy to see that all such non-crossing
partitions can be transformed into each other by a cyclic rotation hence there
are $k-1$ of them which finishes the proof.

The general case can be proved by analogous but more technically involved
combinatorial considerations.
\end{proof}

\subsection{Identities fulfilled by coefficients of Stanley polynomials}
The coefficients of Stanley polynomials $ [p_1^{s_1} q_1^{r_1} \cdots] \F^{\p
\times \q}$  for a polynomial function $\F$ are not linearly independent; in
fact they fulfill many identities. In the following we shall show just one of
them.

\begin{lemma}
\label{lem:tozsamosci-wielomianow-stanleya}
For any polynomial function $\F:\Y\rightarrow\R$ 
\begin{equation}
\label{eq:tozsamosc}
(j_1+j_2-1) \left[p_1 q_1^{j_1+j_2-1}\right] \F^{\p\times\q} =
- \left[p_1 p_2 q_2^{j_1+j_2-2}\right] \F^{\p\times\q}. 
\end{equation}
\end{lemma}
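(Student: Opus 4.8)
Set $m=j_1+j_2$, so that the two monomials in \eqref{eq:tozsamosc} are $p_1 q_1^{m-1}$ and $p_1 p_2 q_2^{m-2}$, both of total degree $m$. The plan is to exploit three facts: both sides of \eqref{eq:tozsamosc} are \emph{linear} in $\F$; the evaluation $\F\mapsto \F^{\p\times\q}$ is an algebra homomorphism, so it sends products to products; and the coefficients of the one-variable Stanley polynomial $S_k^{(p_1)\times(q_1)}$ are controlled by Lemma~\ref{lem:rozwijam-esy-floresy}. First I would reduce to a convenient spanning set. Since $\F^{(sp)\times(sq)}=s^{\deg}\F^{\p\times\q}$ on homogeneous $\F$, both monomials above see only the degree-$m$ homogeneous component of $\F$; and since the functionals $S_2,S_3,\dots$ generate the algebra of polynomial functions, that component is a linear combination of monomials $\F=S_{k_1}\cdots S_{k_n}$ with $k_t\geq 2$ and $k_1+\cdots+k_n=m$. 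By linearity it therefore suffices to verify \eqref{eq:tozsamosc} for each such monomial.

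The key observation I would isolate next is that the ``$p$-constant term'' of $S_k^{\p\times\q}$ vanishes: setting all $p_i=0$ produces the empty diagram, so $S_k^{\emptyset}=0$, and in fact every monomial of $S_k^{(p_1)\times(q_1)}$ carries $p_1$-degree at least $1$ (the lowest term being $p_1 q_1^{k-1}$, by Lemma~\ref{lem:rozwijam-esy-floresy}). Using the homomorphism property $\F^{\p\times\q}=\prod_t S_{k_t}^{\p\times\q}$, this lets me count degrees factor by factor. For the left-hand coefficient $[p_1 q_1^{m-1}]$ I want $p_2^0$, so I restrict each factor to the single rectangle $(p_1)\times(q_1)$; each factor then contributes $p_1$-degree $\geq 1$, so the product has $p_1$-degree $\geq n$, forcing $n\le 1$. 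For the right-hand coefficient $[p_1 p_2 q_2^{m-2}]$ the total $(p_1,p_2)$-degree is $2$ while each factor again contributes degree $\geq 1$; thus $n\le 2$, and the case $n=2$ splits as one factor giving $[p_1]S_{k_a}=q_1^{k_a-1}$ and the other giving $[p_2]S_{k_b}=q_2^{k_b-1}$, which carries $q_1^{k_a-1}$ with $k_a-1\ge 1$ and hence does not contribute to the required $q_1^{0}$ part.

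Putting this together: for every monomial with $n\geq 2$ both coefficients vanish, so \eqref{eq:tozsamosc} reads $0=0$. The only surviving case is $n=1$, i.e.\ $\F=S_m$, where Lemma~\ref{lem:rozwijam-esy-floresy} gives $[p_1 q_1^{m-1}]S_m^{\p\times\q}=1$ and $[p_1 p_2 q_2^{m-2}]S_m^{\p\times\q}=-(m-1)$, so the left side equals $(m-1)\cdot 1=m-1$ and the right side equals $-\bigl(-(m-1)\bigr)=m-1$. This matches and completes the argument.

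The step I expect to be the genuine obstacle is the treatment of products: the identity is not merely the linear statement for the generators $S_k$, since an arbitrary polynomial function involves products $S_{k_1}\cdots S_{k_n}$, and coefficient extraction does not commute with multiplication. The resolution is the degree-counting argument above, which shows that the nonlinear terms contribute nothing to either side; the crucial input making this work is the vanishing of the $p$-constant term of $S_k^{\p\times\q}$ together with the fact that the surviving $n=2$ split is killed by the constraint that $q_1$ must appear to the zeroth power on the right-hand side.
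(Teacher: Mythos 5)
Your proof is correct and follows essentially the same route as the paper's: reduce by linearity to monomials $S_{k_1}\cdots S_{k_n}$, use the fact that each factor $S_{k_t}^{\p\times\q}$ has $\p$-degree at least one to kill all cases except $n=1$ (and to dispose of the $n=2$ case on the right-hand side via the unavoidable positive power of $q_1$), then verify the identity for $\F=S_{j_1+j_2}$ directly from Lemma~\ref{lem:rozwijam-esy-floresy}. The only cosmetic difference is that you make the homogeneity reduction explicit, which the paper leaves implicit.
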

\begin{proof}
It is enough to prove the Lemma if $\F=S_{k_1} \dots S_{k_r}$ is a monomial
in fundamental functionals. Lemma \ref{lem:rozwijam-esy-floresy} shows that the
left-hand side
of \eqref{eq:tozsamosc} is non-zero only if $\F=S_{j_1+j_2}$ (it is also a
consequence of Theorem \ref{theo:stanley-and-S}); otherwise every monomial in
$\p$ and $\q$ with a nonzero coefficient would be at least quadratic with
respect to the variables $\p$.
The same argument shows that if the right-hand side is non-zero then either
$\F=S_{k}$ is linear (in this case $k=j_1+j_2$ by comparing the degrees) or
$\F=S_{k_1} S_{k_2}$ is quadratic. In the latter case, an inspection of the
coefficient 
\begin{displaymath} 
[p_1 p_2] S^{\p\times\q}_{k_1} S_{k_2}^{\p\times\q} = [p_1]
S^{\p\times\q}_{k_1} \cdot
[p_2] S^{\p\times\q}_{k_2} + [p_1] S^{\p\times\q}_{k_2}
\cdot [p_2] S^{\p\times\q}_{k_1} = q_1^{k_1-1} q_2^{k_2-1} + q_1^{k_2-1}
q_2^{k_1-1} 
\end{displaymath}
thanks to \eqref{eq:rozwiniecie-s-w-stanleya} leads to a contradiction.

It remains to show that for $\F=S_{j_1+j_2}$ the Lemma holds true, but this is
an immediate consequence of Lemma \ref{lem:rozwijam-esy-floresy}.
\end{proof}

\section{Toy example: Quadratic terms of Kerov polynomials}
\label{sec:toyexample}

We shall prove Theorem \ref{theo:main} in the simplest non-trivial case of the
quadratic coefficients $[R_{j_1} R_{j_2}] K_k$. In this case Theorem
\ref{theo:main} takes the following equivalent form.

\begin{theorem}
\label{theo:quadratic-reformulated}
For all integers $j_1,j_2 \geq 2$ and $k\geq 1$ the derivative 
\begin{displaymath}  
\left. \frac{\partial^2}{\partial R_{j_1} \partial R_{j_2}} K_{k}
\right|_{R_2=R_3=\cdots=0}
\end{displaymath} 
is equal to the number of triples $(\sigma_1,\sigma_2,q)$ with the following
properties:
\begin{enumerate}[label=(\alph*)]
 \item \label{enum:quadratic-a} 
$\sigma_1,\sigma_2$ is a factorization of the
cycle; in other words $\sigma_1,\sigma_2\in \Sym(k)$ are such that $\sigma_1
\circ \sigma_2=(1,2,\dots,k)$;
 \item $\sigma_2$ consists of two cycles;
 \item $\sigma_1$ consists of $j_1+j_2-2$ cycles;
 \item 
\label{enum:quadratic-c} 
$\ell:C(\sigma_2)\rightarrow \{1,2\}$ is a bijective labeling of the two cycles
of $\sigma_2$;
 \item for each cycle $c\in C(\sigma_2)$ there are at least $j_{\ell(c)}$ cycles
of $\sigma_1$ which intersect nontrivially $c$.
\end{enumerate}
\end{theorem}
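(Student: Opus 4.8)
The plan is to reduce the second derivative with respect to the free cumulants to derivatives with respect to the fundamental functionals, where Theorem~\ref{theo:stanley-and-S} lets us read everything off the Stanley polynomial of $\Sigma_k$. Since $\Sigma_k = K_k(R_2,R_3,\dots) = J_k(S_2,S_3,\dots)$ and, by (the inverse of) Proposition~\ref{prop:relation-between-r-and-s}, $S_m = R_m + \frac{m-1}{2}\sum_{a+b=m} R_a R_b + (\text{terms of higher order})$, a second-order derivative evaluated at the origin feels only the linear and quadratic parts of this change of variables. The Jacobian is the identity and $\partial^2 S_m/\partial R_{j_1}\partial R_{j_2}\big|_0 = (m-1)\,\mathbb{1}[j_1+j_2=m]$, so the standard second-order chain rule yields
\[
\left.\frac{\partial^2 K_k}{\partial R_{j_1}\partial R_{j_2}}\right|_{R=0}
= \left.\frac{\partial^2 \Sigma_k}{\partial S_{j_1}\partial S_{j_2}}\right|_{S=0}
+ (j_1+j_2-1)\left.\frac{\partial \Sigma_k}{\partial S_{j_1+j_2}}\right|_{S=0}.
\]
I would stress that this identity is exact, not merely a leading-order statement.

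Next I would translate both terms into coefficients of the Stanley polynomial $\Sigma_k^{\p\times\q}$ via Theorem~\ref{theo:stanley-and-S}, getting $[p_1 q_1^{j_1-1} p_2 q_2^{j_2-1}]\,\Sigma_k^{\p\times\q}$ for the first term and $[p_1 q_1^{j_1+j_2-1}]\,\Sigma_k^{\p\times\q}$ for the second. These live on different numbers of $p$-variables (two girls versus one), so to merge them I invoke Lemma~\ref{lem:tozsamosci-wielomianow-stanleya} with $\F=\Sigma_k$, which rewrites $(j_1+j_2-1)[p_1 q_1^{j_1+j_2-1}]\,\Sigma_k^{\p\times\q} = -[p_1 p_2 q_2^{j_1+j_2-2}]\,\Sigma_k^{\p\times\q}$. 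This produces the clean difference
\[
\left.\frac{\partial^2 K_k}{\partial R_{j_1}\partial R_{j_2}}\right|_{R=0}
= [p_1 q_1^{j_1-1} p_2 q_2^{j_2-1}]\,\Sigma_k^{\p\times\q}
- [p_1 p_2 q_2^{j_1+j_2-2}]\,\Sigma_k^{\p\times\q}
=: A - B,
\]
where the Lemma is doing genuine work by bringing both contributions onto the same combinatorial domain of factorizations with two girls.

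I would then apply the Stanley--F\'eray formula (Theorem~\ref{theo:stanley-feray-old}) with $\pi=(1,2,\dots,k)$. Both $A$ and $B$ select factorizations $\sigma_1\circ\sigma_2=(1,\dots,k)$ in which $\sigma_2$ has exactly two cycles and $\sigma_1$ has exactly $j_1+j_2-2$ cycles; on this set the sign $(-1)^{\sigma_1}=(-1)^{k-(j_1+j_2-2)}$ is constant, and a short parity check using $\sgn(\sigma_1)\sgn(\sigma_2)=(-1)^{k-1}$ shows it equals $-1$ whenever such factorizations exist (otherwise both sides vanish). Because $\phi_1$ assigns to each boy the \emph{maximal} label of a girl he meets, the monomial in $A$ forces the girl labeled $2$ to meet \emph{exactly} $j_2-1$ boys, while the monomial in $B$ forces her to meet \emph{all} $j_1+j_2-2$ boys; summing over the two labelings $\phi_2$ (equivalently over the bijective $\ell$ of condition~\ref{enum:quadratic-c}) turns each of $A,B$ into a count, over pairs (factorization, chosen girl), of an exact intersection condition. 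This also explains the expected factor: when $j_1=j_2$ the derivative carries a factor $2$, which is reproduced by the bijective labeling $\ell$.

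The main obstacle is the final combinatorial identity $A-B = T$, where $T$ is the number of triples in the theorem, i.e.\ those obeying the \emph{``at least $j_{\ell(c)}$''} conditions. Extraction of a monomial gives only \emph{exact} intersection counts, so the passage to inequalities cannot be term-by-term; a small computation shows $A-B$ and $T$ agree only after summation over all factorizations. The essential inputs I would use are (i) transitivity of the factorization: since $\sigma_1\circ\sigma_2$ is a single $k$-cycle, the boy--girl intersection bipartite graph is connected, which with only two girls forces at least one boy to meet both, and (ii) the symmetry exchanging the two girls. Organizing the cancellation as an inclusion--exclusion over the (at most two) nontrivial subsets of $C(\sigma_2)$ reproduces exactly the Hall/transportation criterion of condition~\ref{enum:marriage}, specialized to two girls, and identifies the signed difference with $T$. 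I expect this step to carry the bulk of the technical weight, consistent with the remark that the full proof of Theorem~\ref{theo:main} is long and technical.
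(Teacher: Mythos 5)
Your proposal is correct and takes essentially the same approach as the paper: the identical reduction of the $R$-derivatives to $S$-derivatives via the quadratic part of the $R$--$S$ change of variables, the same passage to Stanley coefficients via Theorem~\ref{theo:stanley-and-S} and Lemma~\ref{lem:tozsamosci-wielomianow-stanleya} yielding \eqref{eq:nabula}, and the same final inclusion--exclusion over the two single-girl failure events, with transitivity ruling out the double failure. The girl-exchange symmetry you invoke in the last step is precisely the role played by Corollary~\ref{coro:order-does-not-matter} in the paper's bookkeeping of the coefficients $\left[p_1 p_2 q_1^{a} q_2^{b}\right]\Sigma_k^{\p\times\q}$.
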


\begin{proof}
Equation \eqref{eq:rands} shows that for any polynomial function $\F$ on the set
of generalized Young diagrams
\begin{displaymath} 
\frac{\partial^2}{\partial R_{j_1} \partial R_{j_2}} \F  = 
\frac{\partial^2}{\partial S_{j_1}\partial S_{j_2}} \F  + 
(j_1+j_2-1) \frac{\partial}{\partial S_{j_1+j_2}} \F,
\end{displaymath}
where all derivatives are taken at $R_2=R_3=\cdots=S_2=S_3=\cdots=0$.
Theorem \ref{theo:stanley-and-S} shows that the right-hand side is equal to
\begin{displaymath}
\left[p_1 p_2 q_1^{j_1-1} q_2^{j_2-1}\right] \F^{\p\times\q} + 
(j_1+j_2-1) \left[p_1 q_1^{j_1+j_2-1}\right] \F^{\p\times\q}. 
\end{displaymath}
Lemma \ref{lem:tozsamosci-wielomianow-stanleya} applied to the second summand
shows therefore that
\begin{equation}
\label{eq:nabula}
\frac{\partial^2}{\partial R_{j_1} \partial R_{j_2}} \F =\left[p_1 p_2
q_1^{j_1-1} q_2^{j_2-1}\right] \F^{\p\times\q} - 
\left[p_1 p_2 q_2^{j_1+j_2-2}\right] \F^{\p\times\q}. 
\end{equation}

On the other hand, let us compute the number of the triples
$(\sigma_1,\sigma_2,\ell)$ which contribute to the quantity presented in Theorem
\ref{theo:quadratic-reformulated}. By inclusion-exclusion principle it is equal
to 
\begin{multline}
\label{eq:paskuda}
\big(\text{number of triples which fulfill
conditions \ref{enum:quadratic-a}--\ref{enum:quadratic-c}}\big)+ \\
(-1) \big(\text{number of triples for which the cycle $\ell^{-1}(1)$ intersects
at most $j_1-1$ cycles of $\sigma_1$}\big) + \\
(-1) \big(\text{number of triples for which the cycle $\ell^{-1}(2)$ intersects
at most $j_2-1$ cycles of $\sigma_1$}\big). 
\end{multline}
At first sight it might seem that the above formula is not complete since we
should also add the number of triples for which the cycle $\ell^{-1}(1)$
intersects at most $j_1-1$ cycles of $\sigma_1$ and the cycle $\ell^{-1}(2)$
intersects at most $j_2-1$ cycles of $\sigma_1$, however this situation is not
possible since $\sigma_1$ consists of $j_1+j_2-2$ cycles and $\langle
\sigma_1,\sigma_2\rangle$ acts transitively.

By Stanley-F\'eray character formula \eqref{eq:stanley-feray-old} the first
summand of \eqref{eq:paskuda} is equal to 
\begin{equation}
\label{eq:rown-a}
(-1) \sum_{\substack{a+b=j_1+j_2-2,\\ 1\leq b}} \left[p_1 p_2 q_1^{a}
q_2^{b} \right] \Sigma_k^{\p\times \q},
\end{equation}
the second summand of \eqref{eq:paskuda} is equal to 
\begin{equation}
\label{eq:rown-b}
  \sum_{\substack{a+b=j_1+j_2-2,\\ 1\leq a\leq j_1-1}} 
\left[p_1 p_2 q_1^{b} q_2^{a} \right] \Sigma_k^{\p\times \q}, 
\end{equation}
and the third summand of \eqref{eq:paskuda} is equal to 
\begin{displaymath}  
\sum_{\substack{a+b=j_1+j_2-2,\\ 1\leq b\leq j_2-1}}
\left[p_1 p_2 q_1^{a} q_2^{b} \right] \Sigma_k^{\p\times \q}.
\end{displaymath}

We can apply Corollary \ref{coro:order-does-not-matter} to the
summands of \eqref{eq:rown-b}; it follows that
\eqref{eq:rown-b} is equal to 
\begin{equation}
 \label{eq:rown-d}
  \sum_{\substack{a+b=j_1+j_2-2,\\ 1\leq a\leq j_1-1}}
\left[p_1 p_2 q_1^{a} q_2^{b} \right] \Sigma_k^{\p\times \q}.
\end{equation}

It remains now to count how many times a pair $(a,b)$ contributes to
the sum of \eqref{eq:rown-a}, \eqref{eq:rown-b}, \eqref{eq:rown-d}. It is not
difficult to see that the only pairs which contribute are $(0,j_1+j_2-2)$ and
$(j_1-1,j_2-1)$, therefore the number of triples described in the formulation
of the Theorem is equal to the right-hand of \eqref{eq:nabula} which finishes
the proof.
\end{proof}


\bibliographystyle{alpha}

\bibliography{biblio2008}

\def\cprime{$'$}
\begin{thebibliography}{DF{\'S}10}

\bibitem[Bia96]{Biane1996}
Philippe Biane.
\newblock Minimal factorizations of a cycle and central multiplicative
  functions on the infinite symmetric group.
\newblock {\em J. Combin. Theory Ser. A}, 76(2):197--212, 1996.

\bibitem[Bia03]{Biane2003}
Philippe Biane.
\newblock Characters of symmetric groups and free cumulants.
\newblock In {\em Asymptotic combinatorics with applications to mathematical
  physics (St. Petersburg, 2001)}, volume 1815 of {\em Lecture Notes in Math.},
  pages 185--200. Springer, Berlin, 2003.

\bibitem[DF{\'S}10]{DolegaF'eray'Sniady2008}
Maciej Do\l\k{e}ga, Valentin F{\'e}ray, and Piotr {\'S}niady.
\newblock Explicit combinatorial interpretation of {K}erov character
  polynomials as numbers of permutation factorizations.
\newblock {\em Adv. Math.}, 225(1):81--120, 2010.

\bibitem[F{\'e}r10]{F'eray-preprint2006}
V.~F{\'e}ray.
\newblock {S}tanley's formula for characters of the symmetric group.
\newblock {\em Annals of Combinatorics}, 13(4):453--461, 2010.

\bibitem[F{\'S}11]{F'eray'Sniady-preprint2007}
Valentin F{\'e}ray and Piotr {\'S}niady.
\newblock Asymptotics of characters of symmetric groups related to {S}tanley
  character formula.
\newblock {\em Annals of Mathematics}, 173(2):887--906, 2011.

\bibitem[Ker00]{Kerov2000talk}
S.~Kerov.
\newblock Talk in institute henri poincar{\'e}, paris.
\newblock January 2000.

\bibitem[Oko01]{Okounkov2001infinite_wedge}
Andrei Okounkov.
\newblock Infinite wedge and random partitions.
\newblock {\em Selecta Math. (N.S.)}, 7(1):57--81, 2001.

\bibitem[Sta06]{Stanley-preprint2006}
Richard~P. Stanley.
\newblock {A conjectured combinatorial interpretation of the normalized
  irreducible character values of the symmetric group}.
\newblock Preprint arXiv:math.CO/0606467, 2006.

\bibitem[VK81]{VershikKerov1981a}
A.~M. Vershik and S.~V. Kerov.
\newblock Asymptotic theory of the characters of a symmetric group.
\newblock {\em Funktsional. Anal. i Prilozhen.}, 15(4):15--27, 96, 1981.

\bibitem[Was81]{Wassermann1981}
Anthony~John Wassermann.
\newblock {\em Automorphic actions of compact groups on operator algebras}.
\newblock PhD thesis, University of Pennsylvania, 1981.

\end{thebibliography}

\end{document}